\newtheorem{theorem}{Theorem}
\newtheorem{lemma}{Lemma}
\newtheorem{corollary}{Corollary}
\newtheorem{remark}{Remark}
\numberwithin{equation}{section}
\newtheoremstyle{myplain}
{3pt}
{3pt}
{\itshape}
{\parindent}
{\bfseries}
{.}
{.5em}
{}
\newcommand{\Z}{{\mathbf Z}}
\newcommand{\N}{{\mathbf N}}
\newcommand{\R}{{\mathbf R}}
\newcommand{\Expect}{\mathsf{E}}
\let\le=\leqslant
\let\ge=\geqslant
\def\@maketitle{%
  \newpage
  \null
  \begin{center}%
  {\large\bf
    \lineskip .5em%
    \begin{tabular}[t]{c}%
      \@author
    \end{tabular}\par}%
  \vskip 2em%
  \let \footnote \thanks
    \vskip 1.5em%
    {\large\bf \@title \par}%
    \vskip 1em%
    {\normalfont \@date}%
  \end{center}%
  \par
  \vskip 1.5em}
\renewcommand\section{\@startsection {section}{1}{\parindent}%
                                   {2.5ex \@plus 1ex \@minus .2ex}%
                                   {-2.3ex \@plus -.2ex}%
                                   {\normalfont\bf}}
\renewcommand{\@seccntformat}[1]{\csname the#1\endcsname.~}
\renewcommand\@biblabel[1]{#1.}
\begin{document}


\date{}

\begin{center}
	\textbf{{A Limit Theorem for Supercritical Branching Random Walks with Branching Sources of Varying Intensity}}
\end{center}

\begin{center}
	\textit{{\large Ivan Khristolyubov, Elena Yarovaya}}
\end{center}

\begin{abstract}
We consider a supercritical symmetric continuous-time branching random walk
on a multidimensional lattice with a finite number of particle generation
sources of varying positive intensities without any restrictions on the variance of
jumps of the underlying random walk. It is assumed that the spectrum of the
evolution operator contains at least one positive eigenvalue. We prove that
under these conditions the largest eigenvalue of the evolution operator is
simple and determines the rate of exponential growth of particle quantities
at every point on the lattice as well as on the lattice as a whole.

\end{abstract}


\section{Introduction.}

We consider a continuous-time branching random walk (BRW) with a finite
number of branching sources that are situated at some points
$x_{1},x_{2},\dots,x_{N}$ of the lattice $\Z^{d}$, $d\ge1$,
see~\cite{Y17_TPA:e} for details.  The behaviour of BRWs, which are based on
symmetric spatially homogeneous irreducible random walks on $\Z^{d}$ with
finite variance of jumps, for the case of a single branching source was
considered, for example, in~\cite{YarBRW:e}. To the authors' best knowledge,
BRWs with a finite variance of jumps and a finite number of branching sources
of various types, at some of which the underlying random walk can become
asymmetric, were first introduced in~\cite{Y12_MZ:e}, and BRWs with identical
branching sources and no restrictions on the variance of jumps were first
considered in~\cite{Y16-MCAP:e}.

Let $\mu_{t}(y)$ be the number of particles at the time $t$ at the point $y$
under the condition that at the initial time $t=0$ the lattice $\Z^{d}$ contains a
single particle which is situated at $x$, that is, $\mu_{0}(y)=\delta(x-y)$. We
denote by $\mu_{t}=\sum_{y\in \Z^{d}}\mu_{t}(y)$ the total number of
particles on $\Z^{d}$. Let $m_{1}(t,x,y):=\Expect_{x}\mu_{t}(y)$  and
$m_{1}(t,x):=\Expect_{x}\sum_{y\in \Z^{d}}\mu_{t}(y)$ denote the expectation
of the number of particles at $y$ and on the lattice $\Z^{d}$ respectively
under the condition that $\mu_{0}(y)\equiv \delta(x-y)$ at the time $t$.

We assume the branching process at each of the branching sources
$x_{1},x_{2},\dots,x_{N}$ to be a continuous-time Galton-Watson process  (see
\cite[Ch.~I, \S4]{Se2:e}, \cite[Ch.~III]{AN:e}) defined by its infinitesimal
generating function  (which depends on the source $x_{i}$)
\begin{equation}\label{ch6:E-defGenFunc}
f(u,x_{i})=\sum_{n=0}^\infty b_n(x_{i}) u^n,\quad 0\le u \le1,
\end{equation}
where $b_n(x_{i})\ge0$ if $n\ne 1$, $b_1(x_{i})<0$, and $\sum_{n} b_n(x_{i})=0$.
We also assume that the inequalities $\beta_{i}^{(r)}:=f^{(r)}(1,x_{i})<\infty$ hold for all $i=1,2,\dots,N$ and $r\in{\N}$. We call
\begin{equation}\label{D:beta}
\beta_{i}:=\beta_{i}^{(1)}=f^{(1)}(1,x_{i})=\sum_{n}nb_{n}(x_{i})
\end{equation}
the \emph{intensity} of the branching source $x_{i}$.

The behaviour of $m_{1}(t,x,y)$ and $m_{1}(t,x,y)$ can be described in terms
of the \emph{evolution operator}
$\mathscr{H}:=\mathscr{H}_{\beta_{1},\ldots,\beta_{N}}$ \cite{Y17_TPA:e}, the
definition of which is recalled in Section~\ref{S:statement}. We call a BRW
\emph{supercritical}
 if the spectrum of the operator $\mathscr{H}$ contains at least one eigenvalue $\lambda>0$. In the case of a supercritical BRW with equal branching source intensities
$\beta_{1}=\beta_{2}=\dots=\beta_{N}$ with no restrictions on the variance of
jumps, it was shown in~\cite{Y16-MCAP:e} that the spectrum of $\mathscr{H}$
is real and contains no more than $N$ positive eigenvalues counted with their
multiplicity, and that the largest eigenvalue  $\lambda_{0}$ has multiplicity
$1$. In the present study the aforementioned result is extended to the case
of a supercritical BRW with positive source intensities
$\beta_{1},\beta_{2},\ldots,\beta_{N}$ with no restrictions on the
variance of jumps or the number of descendants particles can produce. The main result of this work is the following limit theorem, the
proof of which is provided in Section~\ref{S:limthm}.
\begin{theorem}\label{thm1} Let the operator $\mathscr{H}$ have an isolated eigenvalue $\lambda_{0}>0$, and let the remaining part of its spectrum be located on the halfline $\{\lambda\in\R:~\lambda\leqslant\lambda_{0}-\epsilon\}$, where
$\epsilon>0$. If $\beta_{i}^{(r)} = O(r! r^{r-1})$  for all $i = 1, \ldots,
N$ and $r\in\N$, then in the sense of convergence in distribution the
following statements hold:
\begin{equation}
\label{eq1}
\lim_{t \to \infty} \mu_{t}(y) e^{-\lambda_{0} t} = \psi(y)\xi,\quad \lim_{t \to \infty} \mu_{t} e^{-\lambda_{0} t} = \xi,
\end{equation}
where $\psi(y)$ is a non-negative non-random function and $\xi$ is a proper random variable.
\end{theorem}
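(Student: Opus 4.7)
The plan is the method of factorial moments. Introduce, for $n\ge 1$,
\[
m_n(t,x,y):=\Expect_x\mu_t(y)(\mu_t(y)-1)\cdots(\mu_t(y)-n+1),\quad m_n(t,x):=\Expect_x\mu_t(\mu_t-1)\cdots(\mu_t-n+1),
\]
and differentiate the standard branching equations at the sources to obtain the triangular hierarchy
\[
\partial_t m_n = \mathscr{H}m_n + G_n(m_1,\ldots,m_{n-1}),
\]
where the forcing term $G_n$ is supported on $\{x_1,\ldots,x_N\}$ and is a polynomial in the lower moments with coefficients built from the derivatives $\beta_i^{(r)}$, $r\le n$. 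The proof then has three steps: spectral analysis, induction on $n$, and a moment-determinacy argument.

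First I handle the spectral side. The operator $\mathscr{H}$ is a finite-rank positive perturbation of a self-adjoint random-walk generator, so, using the isolation hypothesis, a Perron-Frobenius-type argument shows that $\lambda_{0}$ is simple with a strictly positive eigenfunction $\psi$. The associated Riesz projector $P$ yields $e^{t\mathscr{H}}=e^{\lambda_{0} t}P+R(t)$ with $\|R(t)\|\le Ce^{(\lambda_{0}-\epsilon)t}$, so that $m_1(t,x,y)=\phi(x)\psi(y)e^{\lambda_{0} t}(1+o(1))$, with an analogous statement for $m_1(t,x)$.

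Next I induct on $n$. Solving the Cauchy problem for $m_n$ by Duhamel and substituting the inductive asymptotics, the forcing $G_n$ is of order $e^{n\lambda_{0} t}$, concentrated at the sources; the principal part of $e^{(t-s)\mathscr{H}}$ dominates the convolution and produces
\[
m_n(t,x,y)\sim n!\,c_n(x)\,\psi(y)^{n}\,e^{n\lambda_{0} t},\qquad m_n(t,x)\sim n!\,c_n(x)\,e^{n\lambda_{0} t},
\]
with a common sequence $c_n(x)$ satisfying an explicit nonlinear recurrence in $n$. Since the ordinary and factorial moments of $\mu_t e^{-\lambda_{0} t}$ differ by lower-order terms, these limits are identified as the moments of the candidate limit variables $\psi(y)\xi$ and $\xi$.

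The final step is moment-determinacy, and here the hypothesis $\beta_i^{(r)}=O(r!\,r^{r-1})$ is decisive. The recurrence for $c_n(x)$ mirrors the enumeration of rooted labelled trees, and a Cayley-type argument propagates the hypothesis along it to yield $c_n(x)\le C^{n} n^{n-1}$. Hence $\Expect\xi^{n}\le C^{n} n!\,n^{n-1}$, which by Stirling gives $(\Expect\xi^{n})^{1/(2n)}\asymp n$; since $\xi\ge 0$, Carleman's condition $\sum_{n}(\Expect\xi^{n})^{-1/(2n)}=\infty$ holds, the limiting moment sequence determines a unique proper distribution, and convergence of moments implies convergence in distribution. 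The main obstacle is the induction step: the distinct intensities $\beta_{1},\ldots,\beta_{N}$ destroy the symmetry present in the equal-intensity case of~\cite{Y16-MCAP:e}, so the combinatorial identification of $c_n(x)$ and the propagation of the $r!\,r^{r-1}$ bound along the recurrence require careful bookkeeping; this is precisely where the peculiar form of the moment hypothesis does its essential work.
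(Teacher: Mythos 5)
Your overall strategy is the same as the paper's: establish simplicity of $\lambda_{0}$ by reducing the eigenvalue problem to the $N\times N$ Green--function matrix and applying Perron--Frobenius, derive moment asymptotics $m_{n}\sim C_{n}e^{n\lambda_{0}t}$ by induction on the triangular moment hierarchy via Duhamel (the paper works with ordinary moments $\Expect_{x}\mu_{t}^{n}(y)$ rather than factorial moments, but that is immaterial), factor out $\psi^{n}(y)$, and conclude by moment convergence plus Carleman's condition for the Stieltjes problem. So the architecture is sound and matches Sections 4--5 of the paper.

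The genuine gap is the step you dispatch with ``a Cayley-type argument propagates the hypothesis along it to yield $c_{n}(x)\leqslant C^{n}n^{n-1}$.'' This is precisely the technical core of the proof, and you give no argument for it. Concretely: closing the induction on $n$ requires bounding
\[
\sum_{r=2}^{n+1}\frac{\beta_{j}^{(r)}}{r!}\sum_{\substack{i_{1},\ldots,i_{r}>0\\ i_{1}+\cdots+i_{r}=n+1}}\frac{(n+1)!}{i_{1}!\cdots i_{r}!}\,C_{i_{1}}\cdots C_{i_{r}},
\]
and after inserting the inductive bound and $\beta_{j}^{(r)}\leqslant D\,r!\,r^{r-1}$ one is left with the composition sums $f(n,r)=\sum_{i_{1}+\cdots+i_{r}=n}i_{1}^{i_{1}}\cdots i_{r}^{i_{r}}$ weighted by $r^{r-1}$ and summed over all $2\leqslant r\leqslant n+1$. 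For the induction to survive, one needs an estimate of the form $f(n,r)\leqslant C\,n^{n}r^{-(r-1)}$, whose decay in $r$ exactly cancels the $r^{r-1}$ growth coming from the branching hypothesis; this is the content of the paper's Lemmas 2--7 and takes several pages of delicate case analysis. Without it (or a genuine substitute, e.g.\ a majorant generating-function argument proving a positive radius of convergence), your claimed bound $c_{n}(x)\leqslant C^{n}n^{n-1}$ — which is in fact slightly stronger than the bound $C_{n}\leqslant\gamma^{n-1}n!\,n^{n}$ the paper establishes — is unsupported, and the Carleman step has nothing to stand on. A second, smaller omission: the induction also uses quantitatively that the resolvent at level $n$ gains a factor $O(1/n)$, i.e.\ $\|(\mathscr{H}-n\lambda_{0}I)^{-1}\|\leqslant 2/(n\lambda_{0})$ for large $n$; your phrase about the principal part dominating the convolution gestures at this but the $1/n$ gain must be made explicit, since it supplies exactly the one power of $n$ needed to absorb the factor $n$ coming from the sum over $r$.
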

Theorem~\ref{thm1} generalizes the results obtained in \cite{BY-2:e,YarBRW:e}
for a supercritical BRW on $\Z^{d}$ with finite variance of jumps and a
single branching source. Its proof is fundamentally based on Carleman's
condition~\cite[Th.~1.11]{ShT:e}. In the case of a single branching source
and particles producing no more than two descendants Theorem~\ref{thm1} was
proved in~\cite{YarBRW:e}. In the case of a single branching source and no
restrictions on the number of descendants particles can produce
Theorem~\ref{thm1} was provided in~\cite{BY-2:e} without proof.

Let us briefly outline the structure of the paper. In
Section~\ref{S:statement} we recall the formal definition of a BRW. In
Section~\ref{S:maineq} we provide some key evolution equations for generating
functions and the moments of particle quantities in the case of a BRW with
several branching sources (Theorems~\ref{thm01}--\ref{thm04}). These theorems
are a natural generalization of the corresponding results that were obtained
for BRWs with a single branching source in~\cite{YarBRW:e}. In
Section~\ref{S:WSBRW} we establish a criterion for the existence of positive
eigenvalues in the spectrum of $\mathscr{H}$ (Theorem~\ref{thm002}), which is
later used to examine the properties of the spectrum of this evolution
operator. We then prove Theorem~\ref{thm06} on the behaviour of particle
quantity moments. Section~\ref{S:limthm} is dedicated to the proof of
Theorem~\ref{thm1}.

\section{The BRW model.}\label{S:statement}

By a branching random walk (BRW) we mean a stochastic process that
combines a random walk of particles with their branching (birth or death) at
certain points on $\Z^{d}$ called \emph {branching sources}.
Let us give more precise definitions.

We assume that the random walk is defined by its matrix of transition
intensities $A=\left(a(x,y)\right)_{x,y \in \Z^{d}}$ that satisfies the
\emph{regularity property} $\sum_{y \in \Z^{d}}a(x,y)=0$ for all $x$, where
$a(x,y)\ge 0$ for $x\neq y$ and $-\infty<a(x,x)<0$.

Suppose that at the moment $t=0$ there is a single particle on the lattice
that is situated at the point $x\in \Z^{d}$. Following the axiomatics
provided in~\cite[Ch.~III, \S2]{GS:e}, the probabilities $p(h,x,y)$ of a particle situated at $x\notin\{x_{1},x_{2},\dots,x_{N}\}$ to move to
an arbitrary point $y$ over a short period of time $h$ can be represented as
\begin{align*}
p(h,x,y)&=a(x,y)h+o(h)\qquad \text{for}\quad y\ne x,\\
p(h,x,x)&=1+a(x,x)h+o(h).
\end{align*}
It follows from these equalities, see, for example,~\cite[Ch.~III]{GS:e}, that the
transition probabilities $p(t,x,y)$ satisfy the following system of
differential-difference equations (called the \emph{Kolmogorov backward
equations}):
\begin{equation}\label{E:ptxy}
\frac{\partial p(t,x,y)}{\partial t}=\sum_{x'}a(x,x') p(t,x',y),\qquad
p(0,x,y)=\delta(x-y),
\end{equation}
where $\delta(\cdot)$ is the discrete Kronecker $\delta$-function on $\Z^{d}$.

The branching process at each of the sources $x_{1},x_{2},\dots,x_{N}$ is governed by the infinitesimal generating function~\eqref{ch6:E-defGenFunc}. Of particular interest to us are the source intensities~\eqref{D:beta}, which can be rewritten as follows:
\[
\beta_{i}=
(-b_{1}(x_{i}))\left(\sum_{n\neq1}n\frac{b_{n}(x_{i})}{(-b_{1}(x_{i}))}-1\right),
\]
where the sum is the average number of descendants a particle has at the source
$x_{i}$.


If at the moment $t=0$ a particle is located at a point different from the branching
sources, then its random walk follows the rules above. Therefore in order to
complete the description of its evolution we only have to consider a
situation combining both the branching process and the random walk, that is
to say, when the particle is at one of the branching sources
$x_{1},x_{2},\dots,x_{N}$.  In this case the possible outcomes that can
happen over a small period of time $h$ are the following: the particle will
either move to a point $y\neq x_{i}$ with the probability of
\[
p(h,x_{i},y)=a(x_{i},y)h+o(h),
\]
or will remain at the source and produce $n\neq1$ descendants with the
probability of
\[
p_{*}(h,x_{i},n)=b_{n}(x_{i})h+o(h)
\]
(we suppose, that the particle itself is included in these $n$ descendants;
therefore, if $n=0$ we say that the particle dies), or no change will happen
to the particle at all, which has the probability of
\[
1-\sum_{y\neq
x_{i}}a(x_{i},y)h-\sum_{n\neq 1}b_{n}(x_{i})h+o(h).
\]
As a result, the sojourn time of a particle at the source $x_{i}$ is
exponentially distributed with the parameter
$-(a(x_{i},x_{i})+b_{1}(x_{i}))$. Note that each new particle evolves
according to the same law independently of other particles.

As it was shown in~\cite{Y12_MZ:e}, \cite{Y13-PS:e}, the moments $m_{1}(t,x,y)$
and $m_{1}(t,x)$ satisfy the following equations:
\begin{align}\label{E:m1xy}
\frac{\partial m_{1}(t,x,y)}{\partial t}&=\sum_{x'}a(x,x') m_{1}(t,x',y)+
\sum_{i=1}^{N} \beta_{i}\delta(x-x_{i})m_{1}(t,x,y),\\\label{E:m1xy1}
\frac{\partial m_{1}(t,x)}{\partial t}&=\sum_{x'}a(x,x') m_{1}(t,x')+
\sum_{i=1}^{N} \beta_{i}\delta(x-x_{i})m_{1}(t,x)
\end{align}
with the initial values $m_{1}(0,x,y)=\delta(x-y)$ and $m_{1}(0,x)\equiv
1$ respectively.

Equations~\eqref{E:ptxy}--\eqref{E:m1xy1} are rather difficult to analyze,
and therefore we will from now on only consider BRWs that satisfy the following additional and quite natural
assumptions. First, we assume that the
intensities $a(x,y)$ are \emph{symmetric} and \emph{spatially homogeneous},
that is, $a(x,y)=a(y,x)=a(0,y-x)$. This allows us, for the sake of brevity,
to denote by $a(x-y)$ any of the three pairwise equal functions $a(x,y)$,
$a(y,x)$, $a(0,y-x)$, that is, $a(x-y):= a(x,y)=a(y,x)=a(0,y-x)$. Second, we
 assume that the random walk is \emph{irreducible}, which in terms of the
matrix $A$ means that it itself is irreducible: for any $z\in\Z^{d}$ there is
such a set of vectors $z_{1},\dots,z_{k}\in\Z^{d}$ that
$z=\sum_{i=1}^{k}z_{i}$ and $a(z_{i})\neq0$ for $i=1,\dots,k$.

One approach to analysing equations~\eqref{E:ptxy} and~\eqref{E:m1xy}
consists in treating them as differential equations in Banach spaces. In
order to apply this approach to our case, we introduce the operators
\[
(\mathscr{A} u)(x)=\sum_{x'}a(x-x') u(x'),\qquad
(\Delta_{x_{i}}u)(x)=\delta(x-x_{i})u(x),\quad i=1,\ldots,N.
\]
on the set of functions $u(x)$, $x\in\Z^{d}$.
We also introduce the operator
\begin{equation}\label{E:defHbbb}
\mathscr{H}:=\mathscr{H}_{\beta_{1},\ldots,\beta_{N}}=
\mathscr{A}+\sum_{i=1}^{N}\beta_{i}\Delta_{x_{i}}.
\end{equation}
for each set of source intensities $\beta_{1},\ldots,\beta_{N}$.
Let us note that all these operators can be regarded as linear continuous
operators in any of the spaces $l^{p}(\mathbf{Z}^{d})$, $p\in[1,\infty]$. We
also point out that the operator $\mathscr{A}$
is self-adjoint in $l^{2}(\mathbf{Z}^{d})$
\cite{Y12_MZ:e,Y13-PS:e,Y16-MCAP:e}.

Now, treating for each $t\ge0$ and each $y\in\Z^{d}$ the functions
$p(t,\cdot,y)$ and $m_{1}(t,\cdot,y)$ as elements of $l^{p}(\mathbf{Z}^{d})$
for some $p$, we can rewrite (see, for example,~\cite{Y12_MZ:e})~\eqref{E:ptxy}
and~\eqref{E:m1xy} as differential equations in $l^{p}(\mathbf{Z}^{d})$:
\begin{alignat*}{2}
\frac{d p(t,x,y)}{d t} &=(\mathscr{A}p(t,\cdot,y))(x),&\qquad p(0,x,y)&=\delta(x-y),\\
\frac{d m_{1}(t,x,y)}{d t} &=(\mathscr{H} m_{1}(t,\cdot,y))(x),&
\qquad m_{1}(0,x,y)&=\delta(x-y),
\end{alignat*}
and~\eqref{E:m1xy1} as a differential equation in
$l^{\infty}(\mathbf{Z}^{d})$:
\begin{equation*}
\frac{d m_{1}(t,x)}{d t}=(\mathscr{H} m_{1}(t,\cdot))(x),
\qquad m_{1}(0,x)\equiv 1.
\end{equation*}
Note that the asymptotic behaviour for large $t$ of the transition
probabilities $p(t,x,y)$, as well as of the mean particle numbers
$m_{1}(t,x,y)$ $m_{1}(t,x)$ is tightly connected with the spectral properties
of the operators $\mathscr{A}$ and $\mathscr{H}$ respectively.

It is convenient to express various properties of the transition
probabilities $p(t,x,y)$ in terms of Green's function, which can be defined
as the Laplace transform of the transition probability $p(t,x,y)$:
\[
G_\lambda(x,y):=\int_0^\infty e^{-\lambda t}p(t,x,y)\,dt,\qquad \lambda\ge 0,
\]
and can also be rewritten (see, for example,~\cite[\S~2.2]{YarBRW:e}) as follows:
\[
G_\lambda(x,y)=\frac{1}{(2\pi)^d} \int_{ [-\pi,\pi ]^{d}}
\frac{e^{i(\theta, y-x)}} {\lambda-\phi(\theta)}\,d\theta =\frac{1}{(2\pi)^d} \int_{ [-\pi,\pi ]^{d}}
\frac{\cos{(\theta, y-x)}} {\lambda-\phi(\theta)}\,d\theta,
\]
where $x,y\in\Z^{d}$, $\lambda \ge 0$, and $\phi(\theta)$ is the Fourier transform of the transition intensity $a(z)$:
\begin{equation}\label{E:Fourier}
\phi(\theta):=\sum_{z\in\mathbf{Z}^{d}}a(z)e^{i(\theta,z)}=\sum_{x \in \Z^{d}} a(x) \cos(x, \theta),\qquad \theta\in[-\pi,\pi]^{d}.
\end{equation}

The function $G_{0}(x,y)$ has a simple meaning for a (non-branching) random
walk: namely, it is equal to the mean amount of time a particle spends at
$y\in\Z^{d}$ as $t\to\infty$ under the condition that at the initial moment
$t=0$ the particle was at $x\in\Z^{d}$. Also, the asymptotic behaviour of the
mean numbers of particles $m_{1}(t,x,y)$ and $m_{1}(t,x)$ as $t\to\infty$ can
be described in terms of the function  $G_\lambda(x,y)$, see,
e.g.,~\cite{YarBRW:e}. Lastly, in~\cite{Y17_TPA:e} it was shown that the
asymptotic behaviour of a BRW depends strongly on whether $G_{0}: =
G_{0}(0,0)$ is finite.

\begin{remark}
The approach described in this section, based on interpreting BRW evolution
equations as differential equations in Banach spaces, is also applicable to a wide selection of problems,
notably to describing the evolution of higher particle number moments (see, e.g.,~\cite{YarBRW:e}, \cite{Y12_MZ:e}).
\end{remark}

\section{Key equations and auxiliary results.}\label{S:maineq}

Let us introduce the Laplace generating functions of the random variables  $\mu_{t}(y)$ and
$\mu_{t}$ for $z \geqslant 0$:
\[
F(z; t, x, y):= \Expect_{x} e^{-z \mu_{t}(y)},\qquad
F(z; t, x):= \Expect_{x} e^{-z \mu_{t}}.
\]
where $\Expect_{x}$ is the mean on condition $\mu_{0}(\cdot) =
\delta_{x}(\cdot)$.

The following four theorems are a result of an immediate generalization of the corresponding theorems in \cite{YarBRW:e} proved for BRWs with a single branching source; since the reasoning is
virtually the same, these theorems are presented here without proof.

\begin{theorem}
\label{thm01} For all $0 \leqslant z \leqslant \infty$ the functions $F(z; t,
x)$ and $F(z; t, x, y)$ are continuously differentiable with respect to $t$ uniformly with respect to $x,y\in\Z^{d}$. They also satisfy the inequalities  $0 \leqslant F(z; t, x), F(z;
t, x, y) \leqslant 1$ and are the solutions to the following Cauchy problems in
$l^{\infty}\left(\Z^{d} \right)$:
\begin{alignat}{2}\label{E:Fzt}
\frac{d F(z;t, \cdot)}{d t} &= \mathscr{A}F(z;t, \cdot) +
\sum_{j=1}^{N} \Delta_{x_{j}} f_{j} \left(F(z; t, \cdot) \right),
\qquad &F(z; 0, \cdot) &= e^{-z},\\
\label{E:Fzty}\frac{d F(z; t, \cdot, y)}{d t} &= \mathscr{A}F(z;t, \cdot, y) +
\sum_{j=1}^{N} \Delta_{x_{j}} f_{j} \left(F(z;t, \cdot, y) \right),
\qquad &F(z;0, \cdot, y) &= e^{-z \delta_{y}(\cdot)}.
\end{alignat}
\end{theorem}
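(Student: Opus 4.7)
The plan is to derive \eqref{E:Fzt} and \eqref{E:Fzty} by the standard branching first-step analysis, adapting the single-source argument of~\cite{YarBRW:e}; the only substantive novelty is that each source $x_{j}$ contributes its own generating function $f_{j}$. The inequalities $0\le F(z;t,x),\,F(z;t,x,y)\le 1$ are immediate from $z\ge 0$ and $\mu_{t},\mu_{t}(y)\ge 0$, while the initial conditions follow from $\mu_{0}(y)=\delta(x-y)$. The endpoint $z=\infty$ will be recovered at the end by monotone convergence in $z$.

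For the equation itself, fix small $h>0$ and apply the Markov property, conditioning on what happens to the initial particle on $[0,h]$. For $x\notin\{x_{1},\ldots,x_{N}\}$ only the random walk acts, and one obtains
\[
F(z;t+h,x,y)=\sum_{x'}\bigl(\delta(x-x')+a(x,x')h\bigr)F(z;t,x',y)+o(h).
\]
For $x=x_{j}$ an additional ``offspring'' event contributes: with probability $b_{n}(x_{j})h+o(h)$ the particle is replaced by $n\ne 1$ independent copies at $x_{j}$, whose joint Laplace functional factors, by the branching property, as $[F(z;t,x_{j},y)]^{n}$. Combining this with the $b_{1}(x_{j})h\,F(z;t,x_{j},y)$ piece inside the ``staying put'' event reconstitutes $\sum_{n}b_{n}(x_{j})[F(z;t,x_{j},y)]^{n}=f_{j}(F(z;t,x_{j},y))$. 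Transposing $F(z;t,x,y)$, dividing by $h$, and sending $h\downarrow 0$ gives \eqref{E:Fzty} pointwise in $x$; the same analysis applied to $\mu_{t}$ yields \eqref{E:Fzt}.

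The main obstacle is to promote these pointwise identities to a genuine Cauchy problem in $l^{\infty}(\Z^{d})$ with uniform continuous differentiability in $t$. For this I would observe that $\mathscr{A}$ is a bounded operator on $l^{\infty}$ (since $\sum_{x'}|a(x-x')|=2|a(0)|$ is finite and independent of $x$), each $\Delta_{x_{j}}$ is a bounded rank-one map, and each $f_{j}$ is globally Lipschitz on $[0,1]$ because $\sum_{n\ne 1}b_{n}(x_{j})=-b_{1}(x_{j})$ and $\beta_{j}=f_{j}'(1)$ are both finite. Hence the right-hand side of \eqref{E:Fzty} defines a Lipschitz vector field on the closed invariant set $\{u\in l^{\infty}:0\le u\le 1\}$, and Picard--Lindel\"of provides a unique $C^{1}$ solution on $[0,\infty)$. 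Since the probabilistically defined $F(z;t,\cdot,y)$ satisfies the equation pointwise and lies in this invariant set, uniqueness identifies it with the Picard solution, delivering both the Banach-space reading of the equation and the uniform $C^{1}$ regularity in $t$ claimed in the theorem.
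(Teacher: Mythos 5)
Your derivation---first-step (Markov) analysis giving the backward equations with one term $\Delta_{x_{j}}f_{j}$ per source, then reading them as a Cauchy problem in $l^{\infty}(\Z^{d})$ via boundedness of $\mathscr{A}$ and the Lipschitz property of each $f_{j}$ on $[0,1]$---is correct and is essentially the approach the paper relies on: it states Theorem~\ref{thm01} without proof, citing that it is an immediate generalization of the single-source argument in \cite{YarBRW:e}. The one step worth tightening is the final identification, since pointwise-in-$x$ differentiability does not by itself place the probabilistic $F$ in the uniqueness class of norm-$C^{1}$ Picard solutions; this is standard to repair by passing to the integral form of the equation and applying Gronwall's inequality in the supremum norm.
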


Theorem~\ref{thm01} allows us to advance from analysing the BRW at hand to
considering the corresponding Cauchy problem in a Banach space instead. We
also note that, contrary to the single branching source case examined
in~\cite{YarBRW:e}, there is not one but several terms
$\Delta_{x_{j}}f_{j}(F)$ in the right-hand side of equations~\eqref{E:Fzt}
and~\eqref{E:Fzty}, $j=1,2,\dots,N$.

Let us set
\[
m_{n}(t, x, y) := \Expect_{x} \mu_{t}^{n}(y),\qquad
m_{n}(t, x) := \Expect_{x} \mu_{t}^{n}.
\]
\begin{theorem}
\label{thm02} For all natural $k \geqslant 1$ the moments $m_{k}(t, \cdot, y)\in
l^{2}\left(\Z^{d}\right)$ and $m_{k}(t, \cdot)\in
l^{\infty}\left(\Z^{d}\right)$ satisfy the following differential equations in the corresponding Banach spaces:
\begin{align}
\label{eq007}
\frac{d m_{1}}{d\,t} &= \mathscr{H}m_{1},\\
\label{eq008}
\frac{d m_{k}}{d\,t} &= \mathscr{H}m_{k} +
\sum_{j=1}^{N} \Delta_{x_{j}} g_{k}^{(j)} (m_{1}, \ldots, m_{k-1}),\qquad k \geqslant 2,
\end{align}
the initial values being $m_{n}(0, \cdot, y) = \delta_{y}(\cdot)$ and $m_{n}(0,
\cdot) \equiv 1$ respectively. Here $\mathscr{H}m_{k}$ stands for $\mathscr{H}m_{k}(t, \cdot, y)$ or $\mathscr{H}m_{k}
(t, \cdot)$ respectively, and
\begin{equation}\label{E:defgnj}
g_{k}^{(j)} (m_{1}, \ldots, m_{k-1}) :=\sum_{r=2}^{k} \frac{\beta_{j}^{(r)}}{r!} \sum_{\substack{i_{1}, \ldots, i_{r} > 0 \\ i_{1} + \cdots + i_{r} = n}} \frac{n!}{i_{1}! \cdots i_{r}!} m_{i_{1}} \cdots m_{i_{r}}.
\end{equation}
\end{theorem}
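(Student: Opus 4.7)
The plan is to obtain the moment equations by differentiating the Cauchy problems \eqref{E:Fzt}--\eqref{E:Fzty} of Theorem~\ref{thm01} in the variable $z$ and evaluating at $z=0$. The standing assumption $\beta_i^{(r)}<\infty$ for all $r$, together with stochastic domination of the population size at each source by a continuous-time Galton--Watson process of matching intensity, yields that $\Expect_x \mu_t^k(y)$ and $\Expect_x \mu_t^k$ are finite for every $k$ and $t$. Consequently $F(z;t,x,y)$ and $F(z;t,x)$ are of class $C^\infty$ in $z$ at $z=0$, and
\[
m_k(t,x,y) = (-1)^k \frac{\partial^k F(z;t,x,y)}{\partial z^k}\bigg|_{z=0},\qquad m_k(t,x) = (-1)^k \frac{\partial^k F(z;t,x)}{\partial z^k}\bigg|_{z=0}.
\]
The boundedness of $\mathscr{A}$ and of each $\Delta_{x_j}$ on $l^\infty(\Z^d)$ allows one to commute $\partial_z^k|_{z=0}$ past these operators in \eqref{E:Fzt} and \eqref{E:Fzty}.

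For $k=1$, direct differentiation together with the identity $f_j'(1)=\beta_j$ yields \eqref{eq007} immediately from the definition \eqref{E:defHbbb}. For $k\ge 2$, the crux is Faà di Bruno's formula applied to each composite $f_j(F(z;t,\cdot))$ in its ordered form
\[
\partial_z^k\bigl[f_j(F)\bigr]\big|_{z=0} = \sum_{r=1}^k \frac{f_j^{(r)}(1)}{r!}\sum_{\substack{i_1+\cdots+i_r=k\\ i_s\ge 1}} \frac{k!}{i_1!\cdots i_r!}\,\partial_z^{i_1}F|_{z=0}\cdots\partial_z^{i_r}F|_{z=0}.
\]
Substituting $f_j^{(r)}(1)=\beta_j^{(r)}$ and $\partial_z^{i_s}F|_{z=0}=(-1)^{i_s}m_{i_s}$, the sign $(-1)^{i_1+\cdots+i_r}=(-1)^k$ cancels the $(-1)^k$ prefactor used to produce $m_k$ on the left-hand side. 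The $r=1$ summand contributes $\beta_j m_k$, which combines with $\mathscr{A}m_k$ via \eqref{E:defHbbb} to form $\mathscr{H}m_k$, while the summands with $2\le r\le k$ reproduce exactly $g_k^{(j)}(m_1,\ldots,m_{k-1})$ as defined in \eqref{E:defgnj}. The initial data are immediate from $\mu_0(y)=\delta(x-y)$: since $\delta$ takes values in $\{0,1\}$ one has $m_k(0,x,y)=\delta(x-y)$, and $m_k(0,x)\equiv 1$ follows from $\mu_0=1$.

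Membership of $m_k(t,\cdot,y)$ in $l^2(\Z^d)$ and of $m_k(t,\cdot)$ in $l^\infty(\Z^d)$ follows by induction on $k$. Since $\mathscr{A}$ and $\Delta_{x_j}$ act boundedly on every $l^p(\Z^d)$, the operator $\mathscr{H}$ generates a uniformly continuous semigroup on each of these spaces. In \eqref{eq008} the inhomogeneity $\sum_j \Delta_{x_j} g_k^{(j)}(m_1,\ldots,m_{k-1})$ is supported on the finite set $\{x_1,\ldots,x_N\}$, hence belongs to every $l^p$; its $l^\infty$-norm is finite by the inductive hypothesis applied at the source points. Duhamel's formula then propagates membership in the correct space for all $t\ge 0$. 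I expect the main obstacle to be the Faà di Bruno bookkeeping — tracking signs and multinomial coefficients so as to land precisely on \eqref{E:defgnj}; the analytic side is routine thanks to the boundedness of the operators involved.
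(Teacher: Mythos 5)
The paper itself offers no proof of Theorem~\ref{thm02}: it defers to the single-source case in the cited monograph, and your argument is precisely that standard route extended to $N$ sources — differentiate the Cauchy problems \eqref{E:Fzt}--\eqref{E:Fzty} $k$ times at $z=0$, apply Fa\`a di Bruno in its ordered-composition form so that the $r=1$ term $\beta_j m_k$ merges with $\mathscr{A}m_k$ into $\mathscr{H}m_k$ and the $r\geqslant 2$ terms give $g_k^{(j)}$ (note the index $n$ in \eqref{E:defgnj} should read $k$), with moment finiteness by domination and the space membership by Duhamel; this is correct. The only caveats are routine: the derivatives at $z=0$ are one-sided (take $z\downarrow 0$ using monotone convergence), and the interchange of $\partial_z^k$ with $d/dt$ and with $\mathscr{A}$, $\Delta_{x_j}$ is most cleanly justified from the integral (Duhamel) form of \eqref{E:Fzt}--\eqref{E:Fzty} using the uniform moment bounds you already invoke, rather than asserted from boundedness of the operators alone.
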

Theorem~\ref{thm02} will later be used in the proof of Theorem~\ref{thm06} to help determine the asymptotic behaviour of the moments as $t\to \infty$.

\begin{theorem}
\label{thm03} The moments $m_{1}(t, x,\cdot)\in l^{2}\left(\Z^{d}\right)$
satisfy the following Cauchy problem in $l^{2}\left(\Z^{d}\right)$:
\[
\frac{d m_{1}(t, x,\cdot)}{d t} = \mathscr{H} m_{1}(t, x,\cdot),\qquad m_{1} (0, x, \cdot) = \delta_{x}(\cdot).
\]
\end{theorem}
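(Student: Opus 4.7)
The plan is to reduce Theorem~\ref{thm03} to the Cauchy problem already established for the conjugate variable, by exploiting the self-adjointness of $\mathscr{H}$ on $l^2(\Z^d)$. First I would recall from Section~\ref{S:statement} that the intensities $a(x,y)=a(y,x)$ are symmetric and spatially homogeneous, so $\mathscr{A}$ is a bounded self-adjoint operator on $l^2(\Z^d)$; each $\Delta_{x_i}$ is the multiplication by $\delta_{x_i}(\cdot)$, i.e.\ the orthogonal projection onto the one-dimensional subspace spanned by $\delta_{x_i}$, and is therefore also bounded and self-adjoint. With real positive $\beta_i$ this makes $\mathscr{H}=\mathscr{A}+\sum_{i=1}^{N}\beta_i\Delta_{x_i}$ a bounded self-adjoint operator on $l^2(\Z^d)$, and $e^{t\mathscr{H}}$ a bounded self-adjoint operator on $l^2(\Z^d)$ for every $t\ge 0$. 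In particular, for every $u_0\in l^2(\Z^d)$ the Cauchy problem $\frac{du}{dt}=\mathscr{H}u$, $u(0)=u_0$, has the unique $l^2$-valued solution $u(t)=e^{t\mathscr{H}}u_0$.

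Next, I would use equation~\eqref{E:m1xy} to observe that, for each fixed $y\in\Z^d$, the function $m_1(t,\cdot,y)$ solves $\frac{d}{dt}m_1(t,\cdot,y)=\mathscr{H}m_1(t,\cdot,y)$ with the initial datum $\delta_y\in l^2(\Z^d)$. Since $\mathscr{H}$ is bounded on $l^2(\Z^d)$ and $\delta_y\in l^2(\Z^d)$, the $l^2$-semigroup solution $e^{t\mathscr{H}}\delta_y$ exists and, by uniqueness of the Cauchy problem in any common larger space $l^p(\Z^d)$ containing both, coincides pointwise with the previously constructed $l^\infty$-valued solution. Hence
\begin{equation*}
m_1(t,x,y)=\bigl(e^{t\mathscr{H}}\delta_y\bigr)(x)=\bigl\langle \delta_x,\,e^{t\mathscr{H}}\delta_y\bigr\rangle_{l^2}=\bigl\langle e^{t\mathscr{H}}\delta_x,\,\delta_y\bigr\rangle_{l^2}=\bigl(e^{t\mathscr{H}}\delta_x\bigr)(y),
\end{equation*}
the middle equality being the self-adjointness of $e^{t\mathscr{H}}$. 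This yields the symmetry $m_1(t,x,y)=m_1(t,y,x)$.

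Finally, fixing $x$ and viewing $m_1(t,x,\cdot)=e^{t\mathscr{H}}\delta_x$ as an element of $l^2(\Z^d)$, I would read off all three claims at once: membership in $l^2(\Z^d)$ is immediate from the boundedness of $e^{t\mathscr{H}}$ on $l^2(\Z^d)$; differentiation of the semigroup gives $\frac{d}{dt}m_1(t,x,\cdot)=\mathscr{H}\,e^{t\mathscr{H}}\delta_x=\mathscr{H}\,m_1(t,x,\cdot)$ as an identity in $l^2(\Z^d)$; and the initial value is $m_1(0,x,\cdot)=\delta_x(\cdot)$. I do not anticipate a serious obstacle; the only point requiring care is the cross-space uniqueness argument used to identify $m_1(t,\cdot,y)$ with $e^{t\mathscr{H}}\delta_y$ in the $l^2$-sense, which is a standard consequence of the boundedness of $\mathscr{H}$ on every $l^p(\Z^d)$ together with the fact that $\delta_y$ lies in all of these spaces.
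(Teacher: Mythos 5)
Your proof is correct, and it follows what the paper intends: Theorem~\ref{thm03} is stated there without proof (as an immediate generalization of the single-source case in \cite{YarBRW:e}), and your argument --- identifying $m_1(t,\cdot,y)=e^{t\mathscr{H}}\delta_y$ from equation~\eqref{E:m1xy}/Theorem~\ref{thm02} and then transposing the spatial arguments via the self-adjointness of $e^{t\mathscr{H}}$ on $l^2(\Z^{d})$ to get $m_1(t,x,\cdot)=e^{t\mathscr{H}}\delta_x$ --- is exactly the ``symmetry of the BRW'' route the paper alludes to after the theorem. The cross-space uniqueness point you flag is indeed harmless, since $\mathscr{H}$ is bounded on every $l^p(\Z^{d})$, $\delta_y$ lies in all of them, and Theorem~\ref{thm02} already provides the $l^2$ statement in the first argument.
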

This theorem allows us to obtain different differential equations by making use of the symmetry of the BRW.

\begin{theorem}
\label{thm04} The moment $m_{1}(t, x, y)$ satisfies both integral equations
\begin{align*}
m_{1}(t, x, y) &= p(t, x, y) + \sum_{j=1}^{N} \beta_{j} \int_{0}^{t} p(t-s, x, x_{j}) m_{1}(t-s, x_{j}, y)\,ds,\\
m_{1}(t, x, y) &= p(t, x, y) + \sum_{j=1}^{N} \beta_{j} \int_{0}^{t} p(t-s, x_{j}, y) m_{1}(t-s, x, x_{j})\,ds.
\end{align*}
Similarly, the moment $m_{1}(t, x)$ satisfies both integral equations
\begin{align}
m_{1}(t, x) &= 1 + \sum_{j=1}^{N} \beta_{j} \int_{0}^{t} p(t-s, x, x_{j}) m_{1}(s, x_{j}) ds,\\
\label{eq015} m_{1}(t, x) &= 1 + \sum_{j=1}^{N} \beta_{j} \int_{0}^{t} m_{1}(s, x, x_{j}) ds.
\end{align}
The moments $m_{k}(t, x, y)$ and $m_{k}(t, x)$ for $k > 1$ satisfy the equations
\begin{align*}
m_{k}(t, x, y) &= m_{1}(t, x, y) +\notag\\&+ \sum_{j=1}^{N} \int_{0}^{t} m_{1}(t-s, x, x_{j}) g_{k}^{(j)} \left(m_{1}(s, x_{j}, y), \ldots, m_{k-1}(s, x_{j}, y) \right)\,ds,\\
m_{k}(t, x) &= m_{1}(t, x) +\notag\\&+ \sum_{j=1}^{N} \int_{0}^{t} m_{1}(t-s, x, x_{j}) g_{k}^{(j)} \left(m_{1}(s, x_{j}), \ldots, m_{k-1}(s, x_{j}) \right)\,ds .
\end{align*}
\end{theorem}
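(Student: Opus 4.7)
The plan is to derive every formula in the statement as an application of the Duhamel principle (variation of parameters) to the differential equations in Banach spaces provided by Theorems~\ref{thm02} and~\ref{thm03}. The key observation is that the evolution operator splits as $\mathscr{H} = \mathscr{A} + \sum_{j=1}^{N}\beta_{j}\Delta_{x_{j}}$, so one can treat $\mathscr{A}$ as the generator of the ``free'' dynamics (whose kernel is exactly $p(t,x,y)$) and regard the point-potential term $\sum_{j}\beta_{j}\Delta_{x_{j}} m_{1}$ as a forcing term.

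For the first integral equation for $m_{1}(t,x,y)$, I would rewrite equation~\eqref{eq007} as $\frac{d m_{1}(t,\cdot,y)}{dt} = \mathscr{A} m_{1}(t,\cdot,y) + \sum_{j}\beta_{j}\Delta_{x_{j}} m_{1}(t,\cdot,y)$ and apply the variation-of-parameters formula with the semigroup $e^{t\mathscr{A}}$, obtaining
\[
m_{1}(t,x,y) = (e^{t\mathscr{A}}\delta_{y})(x) + \sum_{j=1}^{N}\beta_{j}\int_{0}^{t}\bigl(e^{(t-s)\mathscr{A}}\Delta_{x_{j}} m_{1}(s,\cdot,y)\bigr)(x)\,ds.
\]
Using $(e^{t\mathscr{A}}\delta_{y})(x) = p(t,x,y)$ and the identity $(\Delta_{x_{j}}u)(x')=\delta(x'-x_{j})u(x_{j})$ collapses the integrand to $p(t-s,x,x_{j})m_{1}(s,x_{j},y)$. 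The second integral equation for $m_{1}(t,x,y)$ follows by the same reasoning applied to the Cauchy problem of Theorem~\ref{thm03}, where $\mathscr{H}$ now acts on the second spatial variable; the symmetries $p(t,x,y)=p(t,y,x)$ and $m_{1}(t,x,x_{j})=m_{1}(t,x_{j},x)$ arising from the symmetry of $a(\cdot)$ convert the result into the stated form. The first integral equation for $m_{1}(t,x)$ is obtained by the same Duhamel argument applied to the $l^{\infty}$-valued problem with initial datum $\mathbf{1}$, noting that $e^{t\mathscr{A}}\mathbf{1}=\mathbf{1}$ thanks to the regularity property $\sum_{y}a(x,y)=0$; the second one, namely~\eqref{eq015}, is then derived by summing the second integral equation for $m_{1}(t,x,y)$ over $y\in\Z^{d}$ and exploiting $\sum_{y}p(t-s,x_{j},y)=1$.

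For the higher moments $m_{k}$ with $k\geqslant 2$, I would instead apply Duhamel with the full operator $\mathscr{H}$ to equation~\eqref{eq008}, treating $\sum_{j}\Delta_{x_{j}}g_{k}^{(j)}(m_{1},\ldots,m_{k-1})$ as the inhomogeneity. Since the kernel of $e^{t\mathscr{H}}$ against $\delta_{y}$ (respectively against $\mathbf{1}$) is precisely $m_{1}(t,\cdot,y)$ (resp.\ $m_{1}(t,\cdot)$) by Theorems~\ref{thm02} and~\ref{thm03} for $k=1$, collapsing the action of $\Delta_{x_{j}}$ yields the stated integral equations with $m_{1}(t-s,x,x_{j})$ as the convolution kernel.

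The main technical obstacle is justifying Duhamel in the appropriate Banach space: one must verify strong continuity of $e^{t\mathscr{A}}$ and $e^{t\mathscr{H}}$ on $l^{2}(\Z^{d})$ and $l^{\infty}(\Z^{d})$, continuity in $t$ of the forcing terms as functions into those spaces, and absolute convergence of the resulting integrals. Because $\mathscr{A}$ and each $\Delta_{x_{j}}$ are bounded on every $l^{p}(\Z^{d})$ (the former being a symmetric Markov generator with $a(x,x)$ finite), the exponentials are in fact norm-continuous operator-valued functions of $t$, so the variation-of-parameters identity reduces to an elementary bounded-operator computation. As the authors note, this is exactly the reasoning of \cite{YarBRW:e} with a sum over $j=1,\ldots,N$ replacing the single-source term, so no genuinely new ideas are required beyond the bookkeeping of multiple sources.
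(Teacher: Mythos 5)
Your proposal is correct and is essentially the argument the paper has in mind: Theorem~\ref{thm04} is stated without proof as an immediate generalization of the single-source results of \cite{YarBRW:e}, and that argument is precisely your variation-of-constants (Duhamel) computation with the bounded operators $\mathscr{A}$ and $e^{t\mathscr{A}}$ (kernel $p(t,x,y)$, with $e^{t\mathscr{A}}\mathbf{1}=\mathbf{1}$) for the first-moment equations and with $\mathscr{H}$ and $e^{t\mathscr{H}}$ (kernel $m_{1}(t,x,y)$) for $k\geqslant 2$, the only change being the sum over the sources $j=1,\ldots,N$. One minor remark: your computation yields the convolution form $p(t-s,x,x_{j})m_{1}(s,x_{j},y)$ and $p(t-s,x_{j},y)m_{1}(s,x,x_{j})$, so the repeated argument $t-s$ in both factors of the first two displayed equations of the statement should be read as a misprint, as the correctly paired arguments in the equation preceding \eqref{eq015} indicate.
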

This theorem allows us to transition from differential equations to integral equations. It is later used to prove Theorem~\ref{thm06}.


\section{Properties of the operator $\mathscr{H}$.}\label{S:WSBRW}
We call a BRW supercritical if the local and global numbers of particles $\mu_{t}(y)$ and
$\mu_{t}$ grow exponentially. As was mentioned in the Introduction, one of the main results of this work is the equations~\eqref{eq1}, from which it follows that a BRW with several branching sources is supercritical if the operator $\mathscr{H}$ has a positive eigenvalue
 $\lambda$. For this reason we dedicate this section to a further examination of the spectral properties of the operator
$\mathscr{H}$.

We first mention an important statement proved
in~\cite[Lemma~3.1.1]{YarBRW:e}.

\begin{lemma}\label{lem01}
The spectrum $\sigma (\mathscr{A})$ of the operator $\mathscr{A}$ is included
in the half-line $(-\infty, 0]$. Also, since the operator $\sum_{j=1}^{N}
\beta_{j} \Delta_{x_{j}}$ is compact, $\sigma_{ess}(\mathscr{H}) = \sigma
\left(\mathscr{A} \right) \subset (-\infty, 0]$, where
$\sigma_{ess}(\mathscr{H})$ denotes the essential spectrum~\cite{Kato:e} of
the operator $\mathscr{H}$.
\end{lemma}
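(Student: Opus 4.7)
The plan is to work in $l^2(\Z^d)$, where $\mathscr{A}$ is self-adjoint, and to diagonalize it by the discrete Fourier transform. Under the standard isometric isomorphism $l^2(\Z^d)\cong L^2([-\pi,\pi]^d)$, the convolution operator $\mathscr{A}$ corresponds to multiplication by the symbol $\phi(\theta)$ from~\eqref{E:Fourier}; the symbol is bounded, continuous, and real-valued (by the symmetry $a(z)=a(-z)$). Consequently $\sigma(\mathscr{A})$ coincides with the essential range of $\phi$, and the first assertion reduces to showing $\phi(\theta)\le 0$ pointwise on the torus.

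To establish $\phi\le 0$ I would use the regularity property $\sum_{y}a(x,y)=0$, which forces $a(0)=-\sum_{z\ne 0}a(z)$, and then rewrite
\[
\phi(\theta) = a(0) + \sum_{z\ne 0} a(z)\cos(\theta,z) = \sum_{z\ne 0} a(z)\bigl(\cos(\theta,z) - 1\bigr) \le 0,
\]
using $a(z)\ge 0$ for $z\ne 0$ and $\cos\le 1$. This yields $\sigma(\mathscr{A})\subset(-\infty,0]$.

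For the second assertion I would first note that each $\Delta_{x_j}$ is the rank-one self-adjoint operator $u\mapsto u(x_j)\,\delta_{x_j}$ on $l^2(\Z^d)$, so the perturbation $V := \sum_{j=1}^{N}\beta_j\Delta_{x_j}$ has rank at most $N$ and is therefore compact and self-adjoint. Weyl's theorem on the stability of the essential spectrum under compact self-adjoint perturbations then gives $\sigma_{ess}(\mathscr{H}) = \sigma_{ess}(\mathscr{A})$. To identify $\sigma_{ess}(\mathscr{A})$ with $\sigma(\mathscr{A})$ I would observe that a multiplication operator on $L^2$ of an atomless measure space cannot have any isolated eigenvalue of finite multiplicity: an eigenvalue $\lambda$ requires $\phi^{-1}(\lambda)$ to have positive Lebesgue measure, and on such a set $L^2$ is automatically infinite-dimensional. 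Hence $\sigma_{ess}(\mathscr{A}) = \sigma(\mathscr{A})$, and combining the two identities yields the claim.

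The only delicate point I anticipate is this last identification $\sigma_{ess}(\mathscr{A}) = \sigma(\mathscr{A})$; the rest of the argument is a short computation together with an appeal to Weyl's perturbation theorem, and the sign estimate for $\phi$ uses only the regularity condition together with non-negativity of the off-diagonal entries of $A$.
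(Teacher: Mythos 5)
Your proof is correct and is essentially the argument the paper delegates to \cite[Lemma~3.1.1]{YarBRW:e}: diagonalize $\mathscr{A}$ by the Fourier transform, observe that the regularity condition gives $\phi(\theta)=\sum_{z\ne 0}a(z)\bigl(\cos(\theta,z)-1\bigr)\le 0$, and note that $\sum_{j}\beta_{j}\Delta_{x_{j}}$ has rank at most $N$, hence is compact, so Weyl's theorem preserves the essential spectrum. Your additional step identifying $\sigma_{ess}(\mathscr{A})=\sigma(\mathscr{A})$ (any eigenvalue of the multiplication operator by $\phi$ forces $\phi^{-1}(\lambda)$ to have positive Lebesgue measure and thus has infinite multiplicity) is sound and genuinely needed; equivalently, since $\phi$ is continuous on the torus, $\sigma(\mathscr{A})=\phi([-\pi,\pi]^{d})$ is a closed interval and so contains no isolated eigenvalues of finite multiplicity.
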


The following theorem provides a criterion of there being a positive
eigenvalue in the spectrum of the operator $\mathscr{H}$.

\begin{theorem}
\label{thm002} A number $\lambda > 0$ is an eigenvalue and  $f \in
l^{2}\left(\Z^{d} \right)$ is the corresponding eigenvector of the operator
$\mathscr{H}$ if and only if the system of linear equations
\begin{equation}\label{E:fxi}
f(x_{i}) = \sum_{j=1}^{N} \beta_{j}f(x_{j}) I_{x_{j} - x_{i}} (\lambda),\qquad i = 1, \ldots, N
\end{equation}
with respect to the variables $f(x_{i})$, where
\[
I_{x}(\lambda) := G_{\lambda}(x, 0) =
\frac{1}{(2\pi)^{d}} \int_{[-\pi, \pi]^{d}} \frac{e^{-i(\theta, x)}}{\lambda - \phi(\theta)} d\theta,\qquad x \in \Z^{d},
\]
has a non-trivial solution.
\end{theorem}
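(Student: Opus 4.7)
The plan is to reduce the infinite-dimensional eigenvalue equation $\mathscr{H} f = \lambda f$ on $l^{2}(\Z^{d})$ to the $N$-dimensional linear system \eqref{E:fxi} by inverting $\lambda I - \mathscr{A}$. The key enabling fact is Lemma~\ref{lem01}: since $\sigma(\mathscr{A}) \subset (-\infty, 0]$ and $\lambda > 0$, the operator $\lambda I - \mathscr{A}$ is boundedly invertible on $l^{2}(\Z^{d})$, and its resolvent kernel is precisely Green's function $G_{\lambda}(x,y) = \int_{0}^{\infty} e^{-\lambda t} p(t,x,y)\,dt$, obtained by Laplace-transforming the semigroup identity $\frac{d}{dt} p(t,\cdot,y) = \mathscr{A} p(t,\cdot,y)$. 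Spatial homogeneity together with the evenness of $\lambda - \phi(\theta)$ in $\theta$ give the identification $G_{\lambda}(x_{i}, x_{j}) = I_{x_{j} - x_{i}}(\lambda)$ that appears in~\eqref{E:fxi}.

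For the necessity direction, I rewrite the eigenvalue equation as
\begin{equation*}
(\lambda I - \mathscr{A}) f = \sum_{j=1}^{N} \beta_{j} f(x_{j})\,\delta_{x_{j}}(\cdot),
\end{equation*}
apply the resolvent to both sides to obtain the representation
\begin{equation*}
f(x) = \sum_{j=1}^{N} \beta_{j} f(x_{j})\, G_{\lambda}(x, x_{j}),
\end{equation*}
and then specialize to $x = x_{i}$ to recover exactly~\eqref{E:fxi}. Non-triviality of the vector $(f(x_{1}), \ldots, f(x_{N}))$ is forced by the contrapositive: if it vanished, the displayed representation would give $f \equiv 0$, contradicting the assumption that $f$ is an eigenvector.

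For sufficiency, given a non-trivial solution $(c_{1}, \ldots, c_{N})$ of~\eqref{E:fxi}, I set $f(x) := \sum_{j=1}^{N} \beta_{j} c_{j}\, G_{\lambda}(x, x_{j})$ and check three things. First, $f \in l^{2}(\Z^{d})$: by Plancherel applied to the Fourier representation of $G_{\lambda}$, it suffices that $(\lambda - \phi(\theta))^{-1} \in L^{2}([-\pi,\pi]^{d})$, which follows from the bound $\lambda - \phi(\theta) \ge \lambda > 0$ (indeed $\phi(0) = 0$ by the regularity $\sum_{z} a(z) = 0$, and $\phi(\theta) \le 0$ in general because $a(0) < 0$ while $a(x) \ge 0$ for $x \ne 0$, cf.~\eqref{E:Fourier}). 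Second, substituting $x = x_{i}$ and using~\eqref{E:fxi} yields $f(x_{i}) = c_{i}$, so in particular $f \not\equiv 0$. Third, since $(\lambda I - \mathscr{A}) G_{\lambda}(\cdot, x_{j}) = \delta_{x_{j}}(\cdot)$, one gets $(\lambda I - \mathscr{A}) f = \sum_{j} \beta_{j} c_{j}\, \delta_{x_{j}} = \sum_{j} \beta_{j} f(x_{j})\, \delta_{x_{j}} = \sum_{j} \beta_{j}\, \Delta_{x_{j}} f$, i.e., $\mathscr{H} f = \lambda f$. The only mildly delicate step is the $l^{2}$-verification, and once the bound $\phi \le 0$ is in hand it reduces to a uniform estimate of $(\lambda - \phi(\theta))^{-1}$; I do not anticipate any substantive obstacle beyond this point.
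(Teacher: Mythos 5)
Your proof is correct and takes essentially the same route as the paper: the paper applies the Fourier transform to the eigenvalue equation, solves to get $\tilde{f}(\theta)=(\lambda-\phi(\theta))^{-1}\sum_{j}\beta_{j}f(x_{j})e^{i(\theta,x_{j})}$ and inverts, which is exactly your resolvent/Green's-function inversion, since $(\lambda I-\mathscr{A})^{-1}$ acts in Fourier variables as multiplication by $(\lambda-\phi(\theta))^{-1}$ and has kernel $G_{\lambda}$. Your explicit treatment of the sufficiency direction (defining $f$ from a nontrivial solution and checking $f\in l^{2}$, $f(x_{i})=c_{i}$, and $\mathscr{H}f=\lambda f$) just spells out the paper's closing remark that any solution of the system determines $f$ through the representation formula.
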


\begin{proof}
For $\lambda > 0$ to be an eigenvalue of the operator
$\mathscr{H}$ it is necessary and sufficient that there be a non-zero element $f
\in l^{2}\left(\Z^{d} \right) $ that satisfies the equation
\[
\left(\mathscr{H} - \lambda I \right)f =
\left(\mathscr{A} + \sum_{j=1}^{N} \beta_{j}\Delta_{x_{j}} - \lambda I \right)f = 0.
\]
Since $(\Delta_{x_{j}} f)(x) := f(x)\delta_{x_{j}}(x) =
f(x_{j})\delta_{x_{j}}(x)$, the preceding equality can be rewritten as follows:
\[
(\mathscr{A}f)(x) + \sum_{j=1}^{N} \beta_{j}f(x_{j})\delta_{x_{j}}(x) = \lambda f(x),\qquad x \in \Z^{d}.
\]
By applying the Fourier transform to this equality, we obtain
\begin{equation}\label{eq999}
(\widetilde{\mathscr{A}f})(\theta) + \sum_{j=1}^{N} \beta_{j} f(x_{j}) e^{i(\theta, x_{j})} = \lambda \tilde{f}(\theta),\qquad \theta \in [-\pi, \pi]^{d}.
\end{equation}
Here the Fourier transform $\widetilde{\mathscr{A}f}$ of the function
$(\mathscr{A}f)(x)$ is of the form $\phi \tilde{f}$, where $\tilde{f}$ is the
Fourier transform of the function $f$, and the function $\phi(\theta)$ is
defined by the equality~\eqref{E:Fourier}, see~\cite[Lemma 3.1.1]{YarBRW:e}. With
this in mind, we rewrite the equality~\eqref{eq999} as
\[
\phi(\theta) \tilde{f}(\theta) + \sum_{j=1}^{N} \beta_{j} f(x_{j}) e^{i(\theta, x_{j})} =
\lambda \tilde{f}(\theta),\qquad \theta \in [-\pi, \pi]^{d},
\]
or
\begin{equation}\label{eq998}
\tilde{f}(\theta) = \frac{1}{\lambda - \phi(\theta)} \sum_{j=1}^{N}  \beta_{j} f(x_{j}) e^{i(\theta, x_{j})},\qquad \theta \in [-\pi, \pi]^{d}.
\end{equation}
Since $\lambda > 0$ and $\phi(\theta) \leqslant 0$,  $
\int_{[-\pi, \pi]^{d}} |\lambda - \phi(\theta)|^{-2} \,d\theta < \infty$, which allows us to apply the inverse Fourier transform to~\eqref{eq998}:
\begin{equation}\label{eq997}
f(x)= \sum_{j=1}^{N} \beta_{j} f(x_{j}) I_{x_{j} - x}(\lambda),\qquad x \in \Z^{d}.
\end{equation}

Finally, we note that any solution of the system~\eqref{E:fxi}
completely defines the function $f(x)$ on the entirety of its domain by the formula~\eqref{eq997}, which proves the theorem.
\end{proof}

\begin{corollary}
\label{cor1} The number of positive eigenvalues of the
$\mathscr{H}$, counted with their multiplicity, does not exceed $N$.
\end{corollary}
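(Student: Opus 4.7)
The plan is to view $\mathscr{H}$ as a finite-rank positive semi-definite perturbation of the non-positive self-adjoint operator $\mathscr{A}$ and then apply a simple rank-counting argument in $l^{2}(\Z^{d})$. Writing $\mathscr{H} = \mathscr{A} + V$ with $V := \sum_{j=1}^{N}\beta_{j}\Delta_{x_{j}}$, the formula $(Vf)(x) = \sum_{j}\beta_{j}f(x_{j})\delta_{x_{j}}(x)$ shows that the range of $V$ lies in the $N$-dimensional subspace spanned by $\delta_{x_{1}},\dots,\delta_{x_{N}}$, so $\operatorname{rank} V \le N$, and that $\ker V = \{f : f(x_{j})=0,\ j=1,\dots,N\}$. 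Since every source intensity is positive, $\langle Vf,f\rangle = \sum_{j}\beta_{j}|f(x_{j})|^{2}$ is non-negative and vanishes precisely on $\ker V$.

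The core step uses Lemma~\ref{lem01}: since $\sigma_{ess}(\mathscr{H})\subset(-\infty,0]$, the positive part of the spectrum of $\mathscr{H}$ consists only of isolated eigenvalues of finite multiplicity, so the (orthogonal) direct sum $E$ of the corresponding eigenspaces is well defined, and $\dim E$ equals the total positive-eigenvalue multiplicity to be bounded. For any non-zero $f \in E$, spectral decomposition yields $\langle\mathscr{H}f,f\rangle > 0$, while Lemma~\ref{lem01} gives $\langle\mathscr{A}f,f\rangle\le 0$; subtracting, $\langle Vf,f\rangle > 0$ on $E\setminus\{0\}$, hence $E\cap\ker V = \{0\}$. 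The orthogonal projection onto the at-most-$N$-dimensional subspace $(\ker V)^{\perp}$ is then injective on $E$, giving $\dim E \le N$.

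There is no serious obstacle; the proof is short and the only care required is in the two ingredients carrying positivity: that $\mathscr{A}$ has non-positive spectrum (Lemma~\ref{lem01}), which lets me transfer the positive sign from $\mathscr{H}$ onto $V$, and that all $\beta_{j}$ are positive, which ensures $V$ itself is non-negative semi-definite. This route bypasses Theorem~\ref{thm002}, but an alternative starting from it would symmetrize the finite linear system via the substitution $f(x_{j})\mapsto \sqrt{\beta_{j}}\,f(x_{j})$, producing the real symmetric Gram-type matrix $\widetilde B(\lambda)_{ij} = \sqrt{\beta_{i}\beta_{j}}\,I_{x_{j}-x_{i}}(\lambda)$; differentiating under the integral sign shows $\widetilde B(\lambda)$ is monotone decreasing in $\lambda$ in the operator order, so each of its $N$ eigenvalues attains the value $1$ at most once, again capping the total multiplicity at $N$.
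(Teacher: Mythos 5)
Your argument is correct, and it is genuinely different from the paper's. The paper deduces the bound from Theorem~\ref{thm002}: by~\eqref{eq997} every eigenfunction of $\mathscr{H}$ with positive eigenvalue is reconstructed linearly from its values $\left(f(x_{1}),\dots,f(x_{N})\right)$, so $N+1$ linearly independent eigenfunctions would force $N+1$ linearly independent vectors in an $N$-dimensional space. You instead never touch the Green's function: writing $\mathscr{H}=\mathscr{A}+V$ with $V=\sum_{j}\beta_{j}\Delta_{x_{j}}$ non-negative of rank at most $N$, you note that on the span $E$ of eigenvectors with positive eigenvalues the form $\langle\mathscr{H}f,f\rangle$ is positive while $\langle\mathscr{A}f,f\rangle\leqslant 0$ by Lemma~\ref{lem01} and self-adjointness, hence $\langle Vf,f\rangle>0$ on $E\setminus\{0\}$, so $E\cap\ker V=\{0\}$ and the projection onto the $N$-dimensional space $\operatorname{span}\{\delta_{x_{1}},\dots,\delta_{x_{N}}\}$ is injective on $E$, giving $\dim E\leqslant N$; this works even without first knowing the positive eigenvalues are finitely many, so the appeal to the essential spectrum is only cosmetic. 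What each route buys: yours is the standard and more general rank-counting principle (a non-positive self-adjoint operator plus a rank-$N$ non-negative perturbation has at most $N$ positive eigenvalues counted with multiplicity), it crucially uses $\beta_{j}>0$ and $\sigma(\mathscr{A})\subset(-\infty,0]$ but nothing about the lattice structure; the paper's route is less self-contained but its key ingredient, the representation~\eqref{eq997} of eigenfunctions through their source values, is reused immediately afterwards (Corollary~\ref{cor2}) to prove simplicity of $\lambda_{0}$, which your form argument does not yield. Your closing aside about the symmetrized matrix $\widetilde{B}(\lambda)$ is plausible but would need additional work (strict operator-order monotonicity of the eigenvalue curves and the identification of the multiplicity of $1$ as an eigenvalue of $G(\lambda)$ with that of $\lambda$ for $\mathscr{H}$); since it is only an alternative sketch, it does not affect the validity of your main proof.
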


\begin{proof}
Suppose the contrary is true. Then there are at least
$N+1$ linearly independent eigenvectors $f_{i}$ of
$\mathscr{H}$. Since, as it was established in the proof of Theorem~\ref{thm002}, the function $f(x)$ satisfies the equality~\eqref{eq997},
where $\beta_{j} > 0$ for all $j$, and $I_{x_{j} - x} > 0$ for all $j$ and $x$, the linear independence of the vectors $f_{i}$ is equivalent to the linear independence of the vectors
\[
\widehat{f}_{i} := \left(f_{i}(x_{1}), \ldots, f_{i}(x_{N}) \right),\qquad i = 1, \ldots, N+1.
\]
Given that such a set of $N+1$ vectors of dimension $N$ is always linearly dependent, so is the initial set of the vectors $f_{i}$, which contradicts our assumption.
\end{proof}

Let us introduce the matrix
\begin{equation}\label{E:defG}
G(\lambda) :=
\begin{pmatrix}
\beta_{1} I_{0}(\lambda)  & \beta_{2} I_{x_{2} - x_{1}}(\lambda) & \cdots & \beta_{N} I_{x_{N} - x_{1}}(\lambda)\\
\beta_{1} I_{x_{1} - x_{2}} (\lambda) & \beta_{2} I_{0} (\lambda)  & \cdots & \beta_{N} I_{x_{N} - x_{2}}(\lambda)\\
\cdots & \cdots & \ddots & \cdots\\
\beta_{1} I_{x_{1} - x_{N}}(\lambda) & \beta_{2} I_{x_{2} - x_{N}} (\lambda) & \cdots & \beta_{N} I_{0}(\lambda)
\end{pmatrix}.
\end{equation}

\begin{corollary}\label{corD} A number $\lambda > 0$ is an eigenvalue of
$\mathscr{H}$ if and only if $1$ is an eigenvalue of the matrix $G(\lambda)$, or, in other words, when the equality
\[
\det(G(\lambda)-I)=0
\]
holds.
\end{corollary}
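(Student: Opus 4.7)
The plan is to reduce the statement directly to Theorem~\ref{thm002}, whose conclusion already encodes almost everything needed. By that theorem, $\lambda>0$ is an eigenvalue of $\mathscr{H}$ if and only if the linear system~\eqref{E:fxi} in the $N$ unknowns $f(x_1),\dots,f(x_N)$ admits a non-trivial solution, since any such solution extends uniquely via formula \eqref{eq997} to an $l^2$-eigenvector $f$ of $\mathscr{H}$.

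So the main task is to recognise~\eqref{E:fxi} as the matrix equation $\hat{f}=G(\lambda)\hat{f}$, where $\hat{f}=\bigl(f(x_1),\dots,f(x_N)\bigr)^{T}$. Inspecting the definition~\eqref{E:defG} of $G(\lambda)$, the $(i,j)$-entry is $\beta_{j}I_{x_{j}-x_{i}}(\lambda)$, so the $i$-th component of $G(\lambda)\hat{f}$ is exactly $\sum_{j=1}^{N}\beta_{j}f(x_{j})I_{x_{j}-x_{i}}(\lambda)$, which coincides with the right-hand side of~\eqref{E:fxi}. Therefore the system~\eqref{E:fxi} has a non-trivial solution if and only if $(G(\lambda)-I)\hat{f}=0$ has one, which in turn is equivalent to $\det(G(\lambda)-I)=0$, i.e.\ to $1$ being an eigenvalue of $G(\lambda)$.

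There is no real obstacle here; the only point that deserves a brief remark is that the passage between an $l^{2}$-eigenvector $f$ and the $N$-vector $\hat{f}$ is a genuine bijection on the respective eigenspaces. One direction is just restriction to $\{x_1,\dots,x_N\}$; the other uses~\eqref{eq997} to reconstruct $f$ on all of $\Z^{d}$ from its values at the sources, and a non-zero $\hat{f}$ automatically yields a non-zero $f$ because the $I_{x_j-x}(\lambda)$ are strictly positive and the $\beta_j>0$, as already observed in the proof of Corollary~\ref{cor1}. Given this, the corollary follows in a single short paragraph.
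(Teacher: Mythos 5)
Your argument is correct and is essentially the paper's own proof: the paper also treats Corollary~\ref{corD} as a direct reformulation of the solvability condition for the system~\eqref{E:fxi} from Theorem~\ref{thm002}, i.e.\ reading~\eqref{E:fxi} as $(G(\lambda)-I)\hat{f}=0$ and noting that a non-trivial solution exists iff $\det(G(\lambda)-I)=0$. Your extra remark on passing between the $l^{2}$-eigenvector $f$ and the vector $\hat{f}$ via~\eqref{eq997} only makes explicit what the paper leaves implicit.
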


\begin{proof}
This statement is a reformulation of the sufficient and necessary condition for consistency of the system~\eqref{E:fxi}.
\end{proof}

\begin{corollary}
\label{cor2} Let $\lambda_{0}>0$ be the largest eigenvalue of the operator
$\mathscr{H}$. Then $\lambda_{0}$ is a simple eigenvalue of $\mathscr{H}$, and $1$ is the largest eigenvalue of the matrix
$G(\lambda_{0})$.
\end{corollary}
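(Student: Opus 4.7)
The plan is to derive both assertions from Perron--Frobenius theory applied to the matrix $G(\lambda_{0})$, using Corollary~\ref{corD} as the bridge between positive eigenvalues of $\mathscr{H}$ and the value~$1$ in the spectrum of $G(\lambda)$. The starting observation is that for every $\lambda>0$ the matrix $G(\lambda)$ has strictly positive entries: the probabilistic representation $I_{x}(\lambda)=\int_{0}^{\infty}e^{-\lambda t}p(t,0,x)\,dt$ together with irreducibility of the walk forces $I_{x}(\lambda)>0$, while every $\beta_{j}$ is positive by assumption. Perron--Frobenius then yields a unique largest eigenvalue $\rho(\lambda)>0$ of $G(\lambda)$ which is simple, strictly dominates the moduli of all other eigenvalues, and admits a strictly positive eigenvector.

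Next I would analyse $\rho$ as a function of $\lambda$. It is continuous in $\lambda$ by standard eigenvalue perturbation, and $\rho(\lambda)\to 0$ as $\lambda\to\infty$ by dominated convergence applied in each entry. The step requiring most care is strict monotonicity: each entry of $G(\lambda)$, being the Laplace transform of a non-trivial non-negative function, is strictly decreasing in $\lambda$, so given a positive Perron eigenvector $v$ of $G(\lambda)$ one has $G(\lambda')v<G(\lambda)v=\rho(\lambda)v$ entrywise for $\lambda'>\lambda$, whence the Collatz--Wielandt characterization of the Perron root yields $\rho(\lambda')<\rho(\lambda)$. This is the step I expect to be the main obstacle.

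Once strict monotonicity is in hand, the second assertion of the corollary follows quickly. By Corollary~\ref{corD} we have $1\in\sigma(G(\lambda_{0}))$, so $\rho(\lambda_{0})\geqslant 1$. If the inequality were strict, the intermediate value theorem applied to the continuous, strictly decreasing $\rho$ with $\lim_{\lambda\to\infty}\rho(\lambda)=0$ would furnish some $\lambda_{1}>\lambda_{0}$ with $\rho(\lambda_{1})=1$; Corollary~\ref{corD} would then make $\lambda_{1}$ an eigenvalue of $\mathscr{H}$, contradicting the maximality of $\lambda_{0}$. Hence $\rho(\lambda_{0})=1$, that is, $1$ is the largest eigenvalue of $G(\lambda_{0})$.

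For the simplicity of $\lambda_{0}$ as an eigenvalue of $\mathscr{H}$, the proof of Theorem~\ref{thm002} implicitly sets up a linear isomorphism between the $\lambda_{0}$-eigenspace of $\mathscr{H}$ in $l^{2}(\Z^{d})$ and the $1$-eigenspace of $G(\lambda_{0})$, given by $f\mapsto(f(x_{1}),\ldots,f(x_{N}))$ with inverse supplied by~\eqref{eq997}. Since the latter eigenspace is one-dimensional by Perron--Frobenius, so is the former, which gives the claimed simplicity.
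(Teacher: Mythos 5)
Your argument is correct and follows essentially the same route as the paper: Perron--Frobenius applied to the positive matrix $G(\lambda_{0})$, continuity of the Perron root together with its decay as $\lambda\to\infty$ and Corollary~\ref{corD} to rule out $\rho(\lambda_{0})>1$ by the maximality of $\lambda_{0}$, and the reduction $f\mapsto(f(x_{1}),\ldots,f(x_{N}))$ via~\eqref{eq997} to transfer simplicity of the eigenvalue $1$ of $G(\lambda_{0})$ back to $\lambda_{0}$. The only deviation is that you also prove strict monotonicity of the Perron root via Collatz--Wielandt, which is correct but not needed for this corollary (continuity plus $\rho(\lambda)\to 0$ suffices); the paper establishes that monotonicity separately in Corollary~\ref{cor3} using Lemma~\ref{lem001}.
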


\begin{proof}
Let us first demonstrate that if $\lambda_{0}$ is the largest eigenvalue of the operator $\mathscr{H}$, then $1$ is the largest (by absolute value) eigenvalue of the matrix $G(\lambda_{0})$. Indeed, assume it is not the case.

It follows from Corollary~\ref{corD} that $\lambda_{0}>0$ is an eigenvalue of
$\mathscr{H}$ if and only if $1$ is an eigenvalue of the matrix
$G(\lambda_{0})$. By the Perron-Frobenius theorem,
see~\cite[Theorem~8.4.4]{HJ:e}, which is applicable to the matrix
$G(\lambda_{0})$ since all its elements are strictly positive, the matrix
$G(\lambda_{0})$ has a strictly positive eigenvalue that is strictly greater
(by absolute value) than any other of its eigenvalues. We denote this
dominant eigenvalue by $\gamma(\lambda_{0})$. Then $\gamma(\lambda_{0})> 1$,
since we assumed that $1$ is not the largest eigenvalue of $G(\lambda_{0})$.
Given that the functions $I_{x_{i} - x_{j}}(\lambda)$ are continuous with
respect to $\lambda$, all elements of $G(\lambda)$, and therefore all
eigenvalues of $G(\lambda)$ are continuous functions of $\lambda$. Because
for all $i$ and $j$ $I_{x_{i} - x_{j}}(\lambda) \to 0$ as $\lambda \to
\infty$, all eigenvalues of the matrix $G(\lambda)$ tend to zero as $\lambda
\to \infty$. Therefore there is such a $\hat{\lambda} > \lambda_{0}$ that
$\gamma(\hat{\lambda}) = 1$. Corollary~\ref{corD} states that this
$\hat{\lambda}$ then has to be an eigenvalue of the operator $\mathscr{H}$,
which contradicts our initial assumption that  $\lambda_{0}$ is the largest
eigenvalue of $\mathscr{H}$.

We have just proved that $1$ is the largest eigenvalue of the matrix $G(\lambda_{0})$; it then follows from the Perron-Frobenius theorem that this eigenvalue is simple. Now, in order to complete the proof we only have to show that the eigenvalue
$\lambda_{0}$ of the operator $\mathscr{H}$ is also simple.

Assume it is not the case, and $\lambda_{0}$ is not simple. Then there are at least two linearly independent eigenvectors $f_{1}$ and $f_{2}$ corresponding to the eigenvalue
$\lambda_{0}$. We then can, by applying the equality~\eqref{eq997} once again, see that the linear independence of the vectors $f_{1}$ and $f_{2}$ is equivalent to the linear independence of the vectors
\[
\hat{f}_{i} := \left(f_{i}(x_{1}), \ldots, f_{i}(x_{N}) \right),\qquad i = 1, 2.
\]
It also follows from Theorem~\ref{thm002} and the definition of
$G(\lambda)$ that both vectors $\hat{f}_{i}$ satisfy the system of linear equations $\left(G(\lambda_{0}) - I\right) f = 0$, which contradicts the simplicity of eigenvalue $1$ of $G(\lambda_{0})$.
This completes the proof.
\end{proof}

We will also need the following result \cite[Corollary~8.1.29]{HJ:e}.
\begin{lemma}
\label{lem001} Let the elements of a matrix $G$ and vector $f$ be strictly positive. Let us also assume that
$\left(G f \right)_{i} > f_{i}$ for all $i=1,\ldots,N$. Then the matrix
$G$ has an eigenvalue $\gamma > 1$.
\end{lemma}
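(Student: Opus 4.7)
The plan is to reduce the lemma to a single application of the Perron--Frobenius theorem to a left (row) eigenvector of $G$. Since the entries of $G$ are strictly positive, so are those of the transpose $G^{T}$, and the Perron--Frobenius theorem (\cite[Theorem~8.4.4]{HJ:e}, the same version already invoked in Corollary~\ref{cor2}) supplies a real number $\gamma:=\rho(G)=\rho(G^{T})>0$---the spectral radius---that is a simple eigenvalue of $G^{T}$ and admits an eigenvector $v$ with $v_{i}>0$ for every $i$; equivalently, $v^{T}G=\gamma\,v^{T}$. Because the spectra of $G$ and $G^{T}$ coincide, this $\gamma$ is automatically an eigenvalue of $G$ as well, so the entire task reduces to showing $\gamma>1$.

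Next I would exploit the hypothesis $(Gf)_{i}>f_{i}$ by pairing it against $v$. Multiplying the identity $v^{T}G=\gamma v^{T}$ on the right by $f$ yields
\[
\gamma\,(v,f)\;=\;v^{T}Gf\;=\;\sum_{i=1}^{N}v_{i}(Gf)_{i}\;>\;\sum_{i=1}^{N}v_{i}f_{i}\;=\;(v,f),
\]
where the strict inequality is legitimate because each $v_{i}$ is strictly positive, each $(Gf)_{i}-f_{i}$ is strictly positive by assumption, and the sum over $i$ runs over finitely many indices. Since both $v$ and $f$ have all strictly positive components, $(v,f)>0$, so dividing through by $(v,f)$ gives $\gamma>1$, as required.

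There is essentially no real obstacle here once the positivity of $v$ is secured---that is precisely what Perron--Frobenius delivers for strictly positive matrices, and it is exactly what promotes the componentwise inequality to a strict scalar inequality. A completely equivalent and arguably shorter route would invoke the Collatz--Wielandt characterization $\rho(G)=\max_{x>0}\min_{i}(Gx)_{i}/x_{i}$, which applied to the admissible test vector $x=f$ gives $\rho(G)\geq\min_{i}(Gf)_{i}/f_{i}>1$ immediately; I would favour the left-eigenvector formulation only because it mirrors the spectral bookkeeping already used in the proof of Corollary~\ref{cor2} and therefore slots most smoothly into the surrounding text.
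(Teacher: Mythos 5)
Your argument is correct, and it is worth noting that the paper itself offers no proof of this statement at all: Lemma~\ref{lem001} is imported verbatim from \cite[Corollary~8.1.29]{HJ:e}, so there is no internal argument to compare yours against. What you have written is essentially the standard proof of that cited corollary, made self-contained with tools already on the table: the Perron--Frobenius theorem (\cite[Theorem~8.4.4]{HJ:e}), which the paper invokes anyway in Corollaries~\ref{cor2}--\ref{cor4}, gives a strictly positive left eigenvector $v$ with $v^{T}G=\rho(G)\,v^{T}$, and pairing the strict componentwise hypothesis $(Gf)_{i}>f_{i}$ against $v$ yields $\rho(G)\,(v,f)=v^{T}Gf>(v,f)$ with $(v,f)>0$, hence $\rho(G)>1$; since $\rho(G)$ is itself an eigenvalue of $G$, the lemma follows with $\gamma=\rho(G)$. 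The strictness is handled properly (finitely many terms, each $v_{i}>0$ and $(Gf)_{i}-f_{i}>0$), and your alternative via the Collatz--Wielandt characterization $\rho(G)\geq\min_{i}(Gf)_{i}/f_{i}>1$ is equally valid and even shorter. The trade-off is simply one of exposition: the paper keeps the text lean by deferring to the reference, while your version makes the spectral input explicit and shows the lemma costs nothing beyond the Perron--Frobenius machinery already used in this section; it also delivers slightly more than stated, namely that the eigenvalue exceeding $1$ can be taken to be the spectral radius itself.
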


\begin{corollary}
\label{cor3} The largest eigenvalue $\gamma(\lambda)$ of the matrix
$G(\lambda)$ is a continuous strictly decreasing function for $\lambda > 0$.
\end{corollary}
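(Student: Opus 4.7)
The plan is to transfer two elementary properties of the individual entries of $G(\lambda)$, namely continuity and strict monotonicity in $\lambda$, to the Perron eigenvalue $\gamma(\lambda)$ using Perron--Frobenius theory together with Lemma~\ref{lem001}. Each entry has the form $\beta_j I_{x_j-x_i}(\lambda)$ with
\[
I_x(\lambda) = \int_0^\infty e^{-\lambda t} p(t, x, 0)\,dt,\qquad \lambda>0,
\]
which is continuous by dominated convergence and strictly decreasing because $p(t,x,0)\ge0$ is not identically zero on any subinterval of $(0,\infty)$ --- indeed, by the irreducibility of the random walk, $p(t,x,0)>0$ for every $t>0$. Continuity of $\gamma(\lambda)$ then follows from the continuous dependence of the spectral radius of a matrix on its entries, since by Corollary~\ref{cor2} $\gamma(\lambda)$ is precisely the spectral radius of $G(\lambda)$.

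For strict monotonicity, I would fix $0<\lambda_1<\lambda_2$ and work with the Perron eigenvector at $\lambda_2$. Entrywise one has $G(\lambda_1)>G(\lambda_2)$. Let $u>0$ be the (strictly positive) Perron eigenvector of $G(\lambda_2)$, so that $G(\lambda_2)u=\gamma(\lambda_2)u$. For every $i$,
\[
\bigl(G(\lambda_1)u\bigr)_i = \sum_{j=1}^{N} G(\lambda_1)_{ij} u_j > \sum_{j=1}^{N} G(\lambda_2)_{ij} u_j = \gamma(\lambda_2)\, u_i,
\]
so the strictly positive vector $u$ satisfies $\bigl(\gamma(\lambda_2)^{-1}G(\lambda_1)\bigr)u > u$ componentwise. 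Lemma~\ref{lem001} then guarantees that the matrix $\gamma(\lambda_2)^{-1}G(\lambda_1)$ has an eigenvalue strictly greater than $1$, i.e.\ $G(\lambda_1)$ has an eigenvalue strictly greater than $\gamma(\lambda_2)$; since $\gamma(\lambda_1)$ is the largest eigenvalue of $G(\lambda_1)$, this yields $\gamma(\lambda_1)>\gamma(\lambda_2)$, as desired.

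The only step requiring care is the strict inequality $I_x(\lambda_1)>I_x(\lambda_2)$ for every $x\in\{x_j-x_i\}$; this is where irreducibility enters, guaranteeing a positive mass of $p(\cdot,x,0)$ on $(0,\infty)$. After that is secured, Lemma~\ref{lem001} does the remaining work and no additional Perron--Frobenius machinery beyond what is already invoked in Corollary~\ref{cor2} is needed.
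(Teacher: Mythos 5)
Your proof is correct and follows essentially the same route as the paper: both rest on the strict entrywise monotonicity of $\beta_j I_{x_j-x_i}(\lambda)$ and on Lemma~\ref{lem001} applied to the (strictly positive) Perron eigenvector taken at the larger value of $\lambda$, the only difference being that you argue directly while the paper phrases it as a contradiction. One cosmetic remark: the identification of $\gamma(\lambda)$ with the spectral radius comes from the Perron--Frobenius theorem for positive matrices rather than from Corollary~\ref{cor2}, but this does not affect the argument.
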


\begin{proof}
The continuity of $\gamma(\lambda)$ follows from the fact that the elements of $G(\lambda)$ are themselves continuous functions of $\lambda$. We now prove the decreasing monotonicity of
$\gamma(\lambda)$. Assume the contrary: let there be such two numbers $\lambda' >
\lambda'' > 0$ that $\gamma(\lambda') \geqslant \gamma(\lambda'') > 0$.
We denote by $f$ an eigenvector of $G(\lambda')$ corresponding to the eigenvalue $\gamma(\lambda')$. By the Perron-Frobenius theorem this vector can be chosen uniquely up to multiplication by a constant, and can furthermore be chosen to be strictly positive. Let us set
\[
G'' := \frac{1}{\gamma(\lambda')} G(\lambda''),\qquad
G' := \frac{1}{\gamma(\lambda')} G(\lambda').
\]
Then $G' f = f$, and the largest eigenvalue of the matrix $G''$ does not exceed
$1$. Also, since all elements of the matrices $G'$ and $G''$ are monotonously decreasing strictly positive functions of $\lambda$, $\left(G''
f\right)_{i} > f_{i}$ for $i=1,\ldots,N$, which contradicts Lemma~\ref{lem001} and concludes the proof.
\end{proof}

\begin{corollary}
\label{cor4} Let the operator $\mathscr{H}$ have an eigenvalue
$\lambda
> 0$. Consider the operator
$\mathscr{H}' = \mathscr{A} + \sum_{j=1}^{N} \beta'_{j} \Delta_{x_{j}}$ with parameters
$\beta_{i}'$, $i=1,\ldots,N$ that satisfy the inequalities
$\beta_{j}' \geqslant \beta_{j}$ for $j=1,\ldots,N$. Moreover, let there be such an $i$ that $\beta_{i}' > \beta_{i}$. Then the operator
$\mathscr{H}'$ has an eigenvalue $\lambda' > \lambda$.
\end{corollary}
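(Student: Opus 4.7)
The plan is to convert the claim about $\mathscr{H}'$ into a claim about the Perron--Frobenius eigenvalue of the matrix $G'(\lambda)$, formed as in~\eqref{E:defG} but with $\beta_j$ replaced by $\beta_j'$. By Corollary~\ref{corD} applied to $\mathscr{H}'$, it suffices to locate some $\lambda' > \lambda$ at which the largest eigenvalue $\gamma'(\lambda')$ of $G'(\lambda')$ equals $1$. The argument then naturally splits into two halves: first show $\gamma'(\lambda) > 1$, then invoke monotonicity and the behaviour of $\gamma'$ at infinity to slide $\lambda$ to the right until $\gamma'$ drops to $1$.

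The first half rests on Lemma~\ref{lem001}. Let $f>0$ be the Perron--Frobenius eigenvector of $G(\lambda)$ normalized by $G(\lambda)f = \gamma(\lambda)f$; by Corollary~\ref{corD}, $\gamma(\lambda) \geq 1$, hence $G(\lambda)f \geq f$ componentwise. Using the hypotheses $\beta_j' \geq \beta_j$ and $\beta_i' > \beta_i$, together with $I_{x_i - x_k}(\lambda) > 0$ for every $k$ (which follows from irreducibility of the walk) and $f(x_i) > 0$, one obtains
\[
(G'(\lambda)f)_k - (G(\lambda)f)_k \;=\; \sum_{j=1}^{N}(\beta_j' - \beta_j)\,I_{x_j - x_k}(\lambda)\,f(x_j) \;\geq\; (\beta_i' - \beta_i)\,I_{x_i - x_k}(\lambda)\,f(x_i) \;>\; 0
\]
for every $k=1,\ldots,N$. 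Thus $G'(\lambda)f > f$ strictly componentwise, and Lemma~\ref{lem001} applied to $G'(\lambda)$ and $f$ yields an eigenvalue of $G'(\lambda)$ strictly greater than $1$, i.e.\ $\gamma'(\lambda) > 1$.

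For the second half, Corollary~\ref{cor3} applied to the family $G'(\cdot)$ gives continuity and strict monotonicity of $\gamma'$ on $(0,\infty)$, and since $I_x(\mu)\to 0$ as $\mu\to\infty$ (dominated convergence in the Laplace transform defining $G_\mu$), every entry of $G'(\mu)$ tends to $0$, so $\gamma'(\mu)\to 0$ as $\mu\to\infty$. Combined with $\gamma'(\lambda) > 1$, the intermediate value theorem furnishes a unique $\lambda' > \lambda$ with $\gamma'(\lambda') = 1$, and Corollary~\ref{corD} then identifies $\lambda'$ as an eigenvalue of $\mathscr{H}'$. I do not anticipate any serious obstacle: the only step requiring real care is the promotion of the weak hypothesis $\beta_j' \geq \beta_j$ (strict at a single index $i$) to the strict componentwise inequality $G'(\lambda)f > f$, which is precisely the input Lemma~\ref{lem001} needs, and which succeeds because the single strict entry is distributed across every coordinate $k$ by the positive factor $I_{x_i - x_k}(\lambda)\,f(x_i)$.
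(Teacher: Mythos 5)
Your proposal is correct and follows essentially the same route as the paper: use Corollary~\ref{corD} to get that $1$ is an eigenvalue of $G(\lambda)$, take the Perron--Frobenius eigenvector $f>0$ with $\gamma(\lambda)\geq 1$, show the strict componentwise inequality $G'(\lambda)f > f$ from $\beta_j'\geq\beta_j$ with strictness at one index, apply Lemma~\ref{lem001} to get $\gamma'(\lambda)>1$, and then use continuity of $\gamma'$ and its decay to $0$ at infinity to find $\lambda'>\lambda$ with $\gamma'(\lambda')=1$. Your explicit writing of the difference $(G'(\lambda)f)_k-(G(\lambda)f)_k$ and the (unneeded but harmless) appeal to monotonicity from Corollary~\ref{cor3} are only cosmetic departures from the paper's argument.
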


\begin{proof}
It suffices to show that the matrix $G'(\lambda)$ corresponding to the operator
$\mathscr{H}'$ and defined according to~\eqref{E:defG}
has an eigenvalue $1$ for some $\lambda'>\lambda$. Let us first demonstrate that the matrix $G'(\lambda)$ has an eigenvalue
$\gamma'> 1$.

Since we assumed $\lambda$ is an eigenvalue of the operator
$\mathscr{H}$, it follows from Corollary~\ref{corD} that $1$ is an eigenvalue of the matrix $G(\lambda)$. Now, as all elements of the matrix
$G(\lambda)$ are strictly positive, by applying the Perron-Frobenius theorem we conclude that
$G(\lambda)$ has the strictly largest (by absolute value) eigenvalue
$\gamma\ge1$ with a corresponding strictly positive eigenvector $f$. Therefore,
\begin{equation}\label{EGl0}
\left(G(\lambda) f\right)_{i} = \gamma f_{i}\geqslant f_{i},\qquad i=1,\ldots,N,
\end{equation}
By assumption, the following inequalities hold:
\[
\beta'_{j} I_{x_{i} - x_{j}}(\lambda)
\geqslant \beta_{j} I_{x_{i} - x_{j}}(\lambda) > 0,\qquad i,j=1,\ldots,N;
\]
moreover,
\[
\beta'_{i} I_{x_{i} - x_{j}}(\lambda) > \beta_{i} I_{x_{i} - x_{j}}(\lambda) > 0.
\]
It then follows from~\eqref{EGl0} that
\[
\left(G'(\lambda) f\right)_{i} > \gamma f_{i}\geqslant f_{i},\qquad i=1,\ldots,N.
\]
We now obtain from Lemma~\ref{lem001} that the matrix $G'(\lambda)$ has an eigenvalue $\gamma'> 1$. Since its largest eigenvalue $\gamma(\lambda)$ is a continuous function of $\lambda$ that tends to zero as $\lambda\to\infty$, there is such a
$\lambda'>\lambda$ that $\gamma(\lambda') = 1$. This completes the proof.
\end{proof}

\begin{corollary}\label{cor5}
Let the operator $\mathscr{H}$ have the largest eigenvalue $\lambda_{0}
> 0$. Consider the operator $\mathscr{H}' = \mathscr{A} + \sum_{j=1}^{N}
\beta'_{j} \Delta_{x_{j}}$ with parameters $\beta_{i}'$, $i=1,\ldots,N$
that satisfy the inequalities $\beta_{j}' \leqslant \beta_{j}$ for
$j=1,\ldots,N$. Moreover, let there be such an $i$ that $\beta_{i}' <
\beta_{i}$. Then all eigenvalues of the operator $\mathscr{H}'$ are strictly less (by absolute value) than $\lambda_{0}$.
\end{corollary}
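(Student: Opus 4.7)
The approach I propose is a symmetric application of Corollary~\ref{cor4}. Since $\mathscr{H}'$ is a non-negative finite-rank perturbation of the self-adjoint operator $\mathscr{A}$ whose spectrum lies in $(-\infty,0]$ by Lemma~\ref{lem01}, every eigenvalue of $\mathscr{H}'$ outside $\sigma_{ess}(\mathscr{H}') = \sigma(\mathscr{A})$ is positive, and $\inf\sigma(\mathscr{H}') \geqslant \inf\sigma(\mathscr{A})$. So the claim reduces to showing that every positive eigenvalue $\lambda'$ of $\mathscr{H}'$ satisfies $\lambda' < \lambda_0$.

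For the positive case, I would argue by contradiction, swapping the roles of $(\mathscr{H},\beta_j)$ and $(\mathscr{H}',\beta_j')$ in Corollary~\ref{cor4}. Suppose $\mathscr{H}'$ has an eigenvalue $\lambda' \geqslant \lambda_0 > 0$. In the hypotheses of Corollary~\ref{cor4} the ``starting'' operator is then $\mathscr{H}'$ with the smaller intensities $\beta_j'$, and the ``perturbed'' operator is $\mathscr{H}$ with intensities $\beta_j \geqslant \beta_j'$ and $\beta_i > \beta_i'$. Corollary~\ref{cor4} produces an eigenvalue $\mu > \lambda'$ of $\mathscr{H}$, whence $\lambda_0 \geqslant \mu > \lambda' \geqslant \lambda_0$, a contradiction.

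An alternative more in the spirit of the preceding proofs is to work directly with the matrices $G(\lambda)$ and $G'(\lambda)$. By Corollary~\ref{cor2}, the Perron eigenvector $f$ of $G(\lambda_0)$ is strictly positive and satisfies $G(\lambda_0) f = f$. Since $\beta_j' \leqslant \beta_j$ for all $j$ with a strict inequality at $j = i$, and $I_{x_i - x_k}(\lambda_0) > 0$, $f(x_i) > 0$, one gets $G'(\lambda_0) f < f$ componentwise. The Collatz--Wielandt direction of the Perron--Frobenius theorem, which is the natural dual to Lemma~\ref{lem001}, then yields $\gamma'(\lambda_0) < 1$ for the Perron eigenvalue $\gamma'(\lambda)$ of $G'(\lambda)$. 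Corollary~\ref{cor3}, applied to $\gamma'$, gives $\gamma'(\lambda) \leqslant \gamma'(\lambda_0) < 1$ for all $\lambda \geqslant \lambda_0$, and Corollary~\ref{corD} rules out any eigenvalue of $\mathscr{H}'$ in $[\lambda_0,\infty)$.

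The one delicate point is the initial reduction to positive eigenvalues --- i.e.\ the ``by absolute value'' clause in the statement. It relies on the positivity of the perturbation ($\beta_j' \geqslant 0$, giving $\mathscr{H}' \geqslant \mathscr{A}$) together with the absence of embedded $l^2$-eigenvalues of the Fourier multiplier $\mathscr{A}$; once this is settled, the core of the argument is the short symmetric application of Corollary~\ref{cor4} described above.
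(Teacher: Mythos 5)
Your main argument---the contradiction obtained by applying Corollary~\ref{cor4} with the roles of $(\mathscr{H},\beta_j)$ and $(\mathscr{H}',\beta_j')$ swapped---is exactly the paper's proof, which states only that the claim follows immediately from Corollary~\ref{cor4}. Your extra material (the reduction of the ``by absolute value'' clause via positivity of the perturbation, and the alternative Perron--Frobenius argument on $G'(\lambda)$) merely fills in details the paper leaves implicit and is consistent with it.
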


\begin{proof}
This statement immediately follows from the corollary above.
\end{proof}

%

\begin{lemma}\label{lem03} Let $\mathscr{H}$ be a continuous self-adjoint operator on a separable Hilbert space $E$, the spectrum of which is a disjoint union of two sets: a finite (counting multiplicity) set of isolated eigenvalues $\lambda_{i} > 0$ and the remaining part of the spectrum which is included in $[-s, 0]$, $s> 0$. Then the solution  $\,m(t)$ of the Cauchy problem
\begin{equation}
\label{eq030}
\frac{d m(t)}{dt} = \mathscr{H} m(t),\qquad m(0) = m_{0},
\end{equation}
satisfies the condition
\[
\lim_{t \to \infty} e^{-\lambda_{0} t} m(t) = C\left(m_{0}\right),
\]
where $\lambda_{0} = \max_{i} \lambda_{i}$.
\end{lemma}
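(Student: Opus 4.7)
The plan is to prove this by direct spectral calculus: since $\mathscr{H}$ is bounded and self-adjoint on a separable Hilbert space, the Cauchy problem~\eqref{eq030} has the unique solution $m(t) = e^{t\mathscr{H}} m_0$, which I will decompose using the spectral projections that the hypothesis makes available. Let $\lambda_0 > \lambda_1 \ge \lambda_2 \ge \cdots \ge \lambda_M > 0$ enumerate the distinct positive eigenvalues, and let $P_k$ denote the orthogonal projection onto the (finite-dimensional, by the multiplicity assumption) eigenspace of $\lambda_k$. Since the remaining part of $\spec(\mathscr{H})$ lies in $[-s,0]$ and is separated from the positive eigenvalues, each $\lambda_k$ is isolated; hence the $P_k$ are genuine spectral (Riesz) projections, mutually orthogonal, and $P_{\mathrm{rest}} := I - \sum_{k=0}^M P_k$ projects onto the spectral subspace corresponding to $\spec(\mathscr{H}) \cap [-s, 0]$.

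Next I would apply this decomposition to $m_0$, obtaining
\[
m(t) = e^{t\mathscr{H}} m_0 = \sum_{k=0}^{M} e^{\lambda_k t} P_k m_0 + e^{t\mathscr{H}} P_{\mathrm{rest}} m_0,
\]
where in the last term $e^{t\mathscr{H}}$ is understood via the spectral theorem applied to the restriction of $\mathscr{H}$ to the invariant subspace $P_{\mathrm{rest}} E$. Multiplying by $e^{-\lambda_0 t}$ gives
\[
e^{-\lambda_0 t} m(t) = P_0 m_0 + \sum_{k=1}^{M} e^{-(\lambda_0 - \lambda_k) t} P_k m_0 + e^{-\lambda_0 t} e^{t\mathscr{H}} P_{\mathrm{rest}} m_0.
\]

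I would then let $t \to \infty$ term by term. For $k \ge 1$ we have $\lambda_0 - \lambda_k > 0$, so each of those finitely many terms vanishes in norm. For the residual term, the functional calculus yields the operator-norm bound
\[
\bigl\| e^{t\mathscr{H}} P_{\mathrm{rest}} \bigr\| \;\le\; \sup_{\lambda \in [-s, 0]} e^{t\lambda} \;=\; 1 \qquad (t \ge 0),
\]
so $\|e^{-\lambda_0 t} e^{t\mathscr{H}} P_{\mathrm{rest}} m_0\| \le e^{-\lambda_0 t}\|m_0\| \to 0$. What survives is $P_0 m_0$, and setting $C(m_0) := P_0 m_0$ proves the claim.

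The only step requiring any care is confirming that $\lambda_0, \ldots, \lambda_M$ are indeed isolated in $\spec(\mathscr{H})$ so that the Riesz projections $P_k$ exist and are finite-dimensional; this is immediate from the hypothesis that the rest of the spectrum lies in $[-s, 0]$, which separates it from the positive eigenvalues by a fixed gap. No subtle estimate is required: the exponential convergence is driven entirely by the positivity of the spectral gap between $\lambda_0$ and the remaining spectrum, which is $\min(\lambda_0 - \lambda_1, \lambda_0)$.
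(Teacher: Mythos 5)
Your proof is correct and follows essentially the same route as the paper: decompose $m_0$ by the spectral projections onto the finitely many positive eigenspaces plus the remainder, observe the eigencomponents evolve as $e^{\lambda_k t}P_k m_0$ while the remainder stays bounded in norm, and multiply by $e^{-\lambda_0 t}$ to isolate $C(m_0)=P_0 m_0$. The only cosmetic difference is that you obtain the bound on the residual term directly from the functional-calculus estimate $\|e^{t\mathscr{H}}P_{\mathrm{rest}}\|\leqslant 1$, whereas the paper derives differential equations for the projected components $x_i(t)$ and $v(t)$ and cites an auxiliary lemma for $\|v(t)\|\leqslant\|v(0)\|$.
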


\begin{proof}
We denote by $V_{\lambda_{i}}$ the finite-dimensional eigenspace of
$\mathscr{H}$ corresponding to the eigenvalue $\lambda_{i}$.
%
Consider the projection $P_{i}$ of $\mathscr{H}$ onto $V_{\lambda_{i}}$,
see~\cite{Kato:e}. Let
\begin{align*}
x_{i}(t) &:= P_{i} m(t),\\
v(t) &:= \left(I - \sum_{i} P_{i} \right) m(t) = m(t) - \sum_{i} x_{i}(t).
\end{align*}
It is known, see~\cite{Kato:e}, that all spectral operators $P_{i}$ and
$\left(I - \sum P_{i} \right)$ commute with $\mathscr{H}$. Therefore
\begin{align*}
\frac{d x_{i}(t)}{dt} &= P_{i} \mathscr{H} m(t) = \mathscr{H} x_{i}(t)\\
\frac{d v(t)}{dt} &= \left(I - \sum P_{i} \right) \mathscr{H} m(t) = \left(I - \sum P_{i} \right)  \mathscr{H} \left(I - \sum P_{i} \right) v(t).
\end{align*}

As $x_{i}(t) \in V_{\lambda_{i}}$, we can see that $\mathscr{H} x_{i}(t) =
\lambda_{i} x_{i}(t)$, from which it follows that $x_{i}(t) = e^{\lambda_{i}
t} x_{i}(0)$. Also, since the spectrum of the operator $\mathscr{H}_{0} :=
\left(I - \sum P_{i} \right) \mathscr{H} \left(I - \sum P_{i} \right)$ is
included into the spectrum of $\mathscr{H}$ and does not contain any of the
isolated eigenvalues $\lambda_{i}$, it is included into $[-s, 0]$. From this
we obtain $|v(t)| \leqslant |v(0)|$ for all $t\geqslant0$,
see~\cite[Lemma~3.3.5]{YarBRW:e}. Therefore
\begin{equation}\label{E:mt}
m(t) = \sum_{i} e^{\lambda_{i} t} P_{i} m(0 ) + v(t),
\end{equation}
and the proof is complete.
\end{proof}

\begin{remark}\label{RemL}
Let $\lambda_{0}$ be the largest eigenvalue of the operator
$\mathscr{H}$. Then due to~\eqref{E:mt}
$C(m_{0})=P_{0} m(0 )$. Therefore $C(m_0)\neq 0$ if and only if the orthogonal projection  $P_{0} m(0 )$ of the initial value $m_0=m(0)$ onto the eigenspace corresponding to the eigenvalue $\lambda_{0}$ is non-zero.

If the eigenvalue $\lambda_{0}$ of the operator $\mathscr{H}$
is simple and $f$ is a corresponding eigenvector, the projection $P_{0}$
is defined by the formula $P_{0}x=\frac{(f,x)}{(f,f)}f$, where $(\cdot,\cdot)$
is the scalar product in the Hilbert space $E$.
In cases when this $\lambda_{0}$ is not simple, describing the projection $P_{0}$ is a significantly more difficult task.

We remind the reader that we proved the simplicity of the largest eigenvalue of $H$ above, which allows us to bypass this complication.
\end{remark}

\begin{theorem}
\label{thm06} Let the operator $\mathscr{H}$, defined as in~\eqref{E:defHbbb} with parameters $\lbrace
\beta_{i} \rbrace_{i=1}^{N}$, have a finite (counting multiplicity) number of positive eigenvalues. We denote the largest of them by $\lambda_{0}$, and the corresponding normalized vector by $f$. Then for all $n
\in\N$ and $t \to \infty$ the following limit statements hold:
\begin{equation}\label{E:mainmom}
m_{n}(t, x, y) \sim C_{n}(x, y) e^{n \lambda_{0} t},\quad
m_{n}(t, x) \sim C_{n}(x) e^{n\lambda_{0} t},
\end{equation}
where
\[
C_{1} (x, y) =  f(y) f(x),\qquad
C_{1}(x) =  f(x)\frac{1}{\lambda_{0}} \sum_{j=1}^{N} \beta_{j} f(x_{j}),
\]
and for $n \geqslant 2$ the functions $C_{n}(x, y)$ and $C_{n}(x) > 0$ are defined by the equalities below:
\begin{align*}
C_{n}(x, y) &= \sum_{j=1}^{N} g^{(j)}_{n}\left(C_{1}(x_{j}, y), \ldots, C_{n-1}(x_{j}, y) \right) D^{(j)}_{n}(x),\\
C_{n}(x) &= \sum_{j=1}^{N} g^{(j)}_{n}\left(C_{1}(x_{j}), \ldots, C_{n-1}(x_{j}) \right) D^{(j)}_{n}(x),
\end{align*}
where $g^{(j)}_{n}$ are the functions defined in~\eqref{E:defgnj}
and $D^{(j)}_{n}(x)$ are certain functions that satisfy the estimate $|D_{n}^{(j)} (x)|\leqslant
\frac{2}{n\lambda_{0}}$ for $n\geqslant
n_{*}$ and some $n_{*}\in\N$.
\end{theorem}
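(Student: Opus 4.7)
The plan is to establish the $n=1$ asymptotic first by spectral means and then obtain higher moments inductively through the integral representations of Theorem~\ref{thm04}. For $n=1$, view $m_{1}(t,\cdot,y)$ as an element of $\ell^{2}(\Z^{d})$ (Theorem~\ref{thm03}) and apply Lemma~\ref{lem03}: the hypotheses on $\mathscr{H}$ together with Lemma~\ref{lem01} furnish exactly the spectral structure required, and Corollary~\ref{cor2} guarantees that the largest eigenvalue $\lambda_{0}$ is simple, so the corresponding spectral projection acts as $P_{0} u = (f,u) f$ for the normalized eigenvector $f$. Consequently $e^{-\lambda_{0} t} m_{1}(t,\cdot,y) \to P_{0}\delta_{y} = f(y) f$ in $\ell^{2}$, which upgrades to uniform pointwise convergence via $\|\cdot\|_{\ell^{\infty}} \le \|\cdot\|_{\ell^{2}}$ and yields $m_{1}(t,x,y) \sim f(y) f(x) e^{\lambda_{0} t}$. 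For $m_{1}(t,x)$, whose initial datum $1$ lies in $\ell^{\infty}$ but not in $\ell^{2}$, I would substitute this asymptotic into the integral equation~\eqref{eq015}; using $\int_{0}^{t} f(x) f(x_{j}) e^{\lambda_{0} s}\,ds \sim \lambda_{0}^{-1} f(x) f(x_{j}) e^{\lambda_{0} t}$ and summing over $j$ with weights $\beta_{j}$ produces the stated $C_{1}(x)$.

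For $n \ge 2$ I would proceed by strong induction using the integral representation
\[
m_{n}(t,x,y) = m_{1}(t,x,y) + \sum_{j=1}^{N} \int_{0}^{t} m_{1}(t-s,x,x_{j})\, g_{n}^{(j)}\bigl(m_{1}(s,x_{j},y),\ldots,m_{n-1}(s,x_{j},y)\bigr)\,ds
\]
from Theorem~\ref{thm04}. Every monomial in the definition~\eqref{E:defgnj} of $g_{n}^{(j)}$ is homogeneous of weighted total degree $n$ (weighting each $m_{i}$ by $i$), so the inductive hypothesis gives $g_{n}^{(j)}(m_{1}(s,x_{j},y),\ldots,m_{n-1}(s,x_{j},y)) \sim g_{n}^{(j)}(C_{1}(x_{j},y),\ldots,C_{n-1}(x_{j},y))\, e^{n\lambda_{0} s}$. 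Combining with $m_{1}(t-s,x,x_{j}) \sim f(x) f(x_{j}) e^{\lambda_{0}(t-s)}$ and computing
\[
e^{-n\lambda_{0} t} \int_{0}^{t} e^{\lambda_{0}(t-s) + n\lambda_{0} s}\,ds = \frac{1 - e^{-(n-1)\lambda_{0} t}}{(n-1)\lambda_{0}} \longrightarrow \frac{1}{(n-1)\lambda_{0}},
\]
one finds that the $j$-th integral contributes, to leading order in $t$, the quantity $\frac{f(x) f(x_{j})}{(n-1)\lambda_{0}}\, g_{n}^{(j)}(C_{1}(x_{j},y),\ldots,C_{n-1}(x_{j},y))\, e^{n\lambda_{0} t}$, yielding the stated $C_{n}(x,y)$ with
\[
D_{n}^{(j)}(x) := e^{-n\lambda_{0} t} \int_{0}^{t} m_{1}(t-s,x,x_{j}) e^{n\lambda_{0} s}\,ds \longrightarrow \frac{f(x) f(x_{j})}{(n-1)\lambda_{0}}.
\]
Since $f$ is $\ell^{2}$-normalized, $|f(x)|, |f(x_{j})| \le 1$, and the elementary bound $\frac{1}{n-1} \le \frac{2}{n}$ for $n \ge 2$ yields $|D_{n}^{(j)}(x)| \le \frac{2}{n\lambda_{0}}$ for all $n \ge n_{*}$ large enough to absorb the convergence remainder. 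The case $m_{n}(t,x)$ is handled identically using the other integral equation from Theorem~\ref{thm04}.

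The principal obstacle will be rigorously justifying the asymptotic substitution inside the polynomial $g_{n}^{(j)}$ and then inside the long integral over $[0,t]$: one must propagate through the induction a quantitative error of the form $m_{k}(s,x_{j},y) = C_{k}(x_{j},y) e^{k\lambda_{0} s}(1 + \eta_{k}(s))$ with $\eta_{k}(s) \to 0$ uniformly in $x_{j},y$ at an exponential rate. This uniform control is supplied in the base case by the $\ell^{\infty}$ convergence derived from the $\ell^{2}$ convergence in Lemma~\ref{lem03}, together with the positive gap separating $\lambda_{0}$ from the rest of the spectrum of $\mathscr{H}$ that the hypothesis of finitely many positive eigenvalues, combined with Lemma~\ref{lem01}, furnishes. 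A secondary concern is that the $1/n$ factor in the bound on $D_{n}^{(j)}$ must not deteriorate through the induction, which is exactly what the comparison $\frac{1}{n-1} \le \frac{2}{n}$ ensures. Once these uniform errors are under control the dominant contribution to each integral comes from $s$ close to $t$, and the leading coefficient computed above is the correct one.
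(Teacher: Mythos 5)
Your $n=1$ argument and the treatment of $m_{1}(t,x)$ via~\eqref{eq015} match the paper. The genuine problem is in the inductive step, in the identification of $D_{n}^{(j)}(x)$. After the change of variables $u=t-s$ one has
\begin{equation*}
e^{-n\lambda_{0}t}\int_{0}^{t}m_{1}(t-s,x,x_{j})\,e^{n\lambda_{0}s}\,ds
=\int_{0}^{t}m_{1}(u,x,x_{j})\,e^{-n\lambda_{0}u}\,du
\;\longrightarrow\;\int_{0}^{\infty}m_{1}(u,x,x_{j})\,e^{-n\lambda_{0}u}\,du
=\bigl((n\lambda_{0}I-\mathscr{H})^{-1}\delta_{x_{j}}\bigr)(x),
\end{equation*}
i.e.\ the limit is the full Laplace transform of $m_{1}$ at $n\lambda_{0}$, not $f(x)f(x_{j})/\bigl((n-1)\lambda_{0}\bigr)$. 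Your substitution $m_{1}(t-s,x,x_{j})\sim f(x)f(x_{j})e^{\lambda_{0}(t-s)}$ is applied exactly where it is not valid: as you yourself observe, the integral is dominated by $s$ near $t$, i.e.\ by bounded values of $t-s$, where $m_{1}$ is still in its transient regime (for instance, as $n\to\infty$ the true limit behaves like $\delta(x-x_{j})/(n\lambda_{0})$ plus lower-order terms, not like $f(x)f(x_{j})/(n\lambda_{0})$). Consequently the bound $|D_{n}^{(j)}(x)|\le 2/(n\lambda_{0})$, which you deduce from $|f|\le1$ and $1/(n-1)\le 2/n$, is not justified by your computation; it rests on a wrong closed form for $D_{n}^{(j)}$.

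The fix is close at hand and brings you to what the paper actually does. The paper works with $\nu_{n}=m_{n}e^{-n\lambda_{0}t}$ and the differential equations of Theorem~\ref{thm02}: since the spectrum of $\mathscr{H}_{n}=\mathscr{H}-n\lambda_{0}I$ lies in $(-\infty,-(n-1)\lambda_{0}]$, the inhomogeneous linear equation forces $\nu_{n}\to-\mathscr{H}_{n}^{-1}(\text{limit of the forcing})$, so $D_{n}^{(j)}(x)=-(\mathscr{H}_{n}^{-1}\delta_{x_{j}})(x)$ --- which is precisely the resolvent expression above --- and the estimate comes from the operator-norm bound $\|\mathscr{H}_{n}^{-1}\|\le 2/(n\lambda_{0})$ for $n\ge n_{*}=2\lambda_{0}^{-1}\|\mathscr{H}\|$, not from properties of $f$. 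In your integral-equation setup the same conclusion follows by bounding $m_{1}(u,x,x_{j})\le\|e^{u\mathscr{H}}\delta_{x_{j}}\|\le e^{u\|\mathscr{H}\|}$, whence $D_{n}^{(j)}(x)\le(n\lambda_{0}-\|\mathscr{H}\|)^{-1}\le 2/(n\lambda_{0})$ for $n\ge 2\lambda_{0}^{-1}\|\mathscr{H}\|$. With that correction (and the uniform-error bookkeeping you already flag for pushing the induction through $g_{n}^{(j)}$ and the integral), your route via Theorem~\ref{thm04} is a viable alternative to the paper's differential-equation argument; as written, however, the value of $D_{n}^{(j)}$ and hence the key estimate are not established.
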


\begin{proof}
For $n \in\N$ we introduce the functions
\[
\nu_{n} := m_{n} (t, x, y) e^{-n \lambda_{0} t}.
\]
We obtain from Theorem~\ref{thm02} (see equations~\eqref{eq007} and~\eqref{eq008} for
$m_{n}$) the following equations for $\nu_{n}$:
\begin{align*}
\frac{d \nu_{1}}{dt} &= \mathscr{H} \nu_{1} - \lambda_{0} \nu_{1},\\
\frac{d \nu_{n}}{dt} &= \mathscr{H} \nu_{n} - n \lambda_{0} \nu_{n} +
\sum_{j=1}^{N} \Delta_{x_{j}} g_{n}^{(j)} \left(\nu_{1}, \ldots, \nu_{n-1} \right),\qquad n \geqslant 2,
\end{align*}
the initial values being $\nu_{n}(0, \cdot, y) = \delta_{y}(\cdot), n \in\N$.

Since $\lambda_{0}$ is the largest eigenvalue of $\mathscr{H}$, for $n
\geqslant 2$ the spectrum of the operator $\mathscr{H}_{n} := \mathscr{H} - n
\lambda_{0} I$ is included into $(-\infty, -(n-1)\lambda_{0}]$. As it was
shown, for example, in \cite[p.~58]{YarBRW:e}, if the spectrum of a
continuous self-adjoint operator $\widetilde{\mathscr{H}}$ on a Hilbert space
is included into $(-\infty, -s], s > 0$, and also $f(t) \to f_{*}$ as $t \to
\infty$, then the solution of the differential equation
\[
\frac{d \nu}{dt} = \widetilde{\mathscr{H}} \nu + f(t)
\]
satisfies the condition $\nu(t) \to -\widetilde{\mathscr{H}}^{-1} f_{*}$.
For this reason for $n \geqslant 2$ we obtain
\begin{multline*}
C_{n}(x, y) = \lim_{t \to \infty} \nu_{n} =
-\sum_{j=1}^{N} \left(\mathscr{H}_{n}^{-1} \Delta_{x_{j}}g_{n}^{(j)}(C_{1}(\cdot, y), \ldots, C_{n-1}(\cdot, y) )\right)(x)=\\
-\sum_{j=1}^{N} g_{n}^{(j)}(C_{1}(x_{j}, y), \ldots, C_{n-1}(x_{j}, y) )(\mathscr{H}_{n}^{-1} \delta_{x_{j}}(\cdot))(x)).
\end{multline*}

Let us now prove the existence of such a natural number $n_{*}$ that for all
$n\geqslant n_{*}$ the estimates
\[
D_{n}^{(j)} (x) := |(\mathscr{H}_{n}^{-1} \delta_{x_{j}}(\cdot))(x)|
\leqslant \frac{2}{n\lambda_{0}}
\]
hold.
We first evaluate the norm of the operator $\mathscr{H}_{n}^{-1}$. For this purpose, let us introduce two vectors $x$ and $u$ such that $u=\mathscr{H}_{n}x=
\mathscr{H}x - n\lambda_{0} x$. Then $\|u\|\geqslant  n\lambda_{0} \|x\| -
\|\mathscr{H}x\|\geqslant (n\lambda_{0} -\|\mathscr{H}\|)\|x\|$, hence
$\|\mathscr{H}_{n}^{-1}u\|=\|x\|\leqslant \|u\|/\left(n\lambda_{0}
-\|\mathscr{H}\|\right)$, and therefore for all $n\geqslant
n_{*}=2\lambda_{0}^{-1}\|\mathscr{H}\|$ the estimate
\[
\|\mathscr{H}_{n}^{-1}\|\leqslant \frac{2}{n\lambda_{0}}
\]
holds. From this we conclude that
\[
|(\mathscr{H}_{n}^{-1} \delta_{x_{j}}(\cdot))(x)| \leqslant
\|\mathscr{H}_{n}^{-1} \delta_{x_{j}}(\cdot)\|\leqslant
\|\mathscr{H}_{n}^{-1} \| \|\delta_{x_{j}}(\cdot) \| \leqslant \frac{2}{n\lambda_{0}},\qquad n\geqslant n_{*}.
\]

Let us now turn to estimating the asymptotic behaviour of particle number
moments. It follows from~\eqref{eq015} that as $t \to \infty$ the following
asymptotic equivalences hold:
\begin{equation}\label{eq1298}
m_{1}(t, x) \sim \sum_{j=1}^{N} \beta_{j} \int_{0}^{t} m_{1}(s, x, x_{j}) \,ds
\sim \sum_{j=1}^{N} \frac{\beta_{j}}{\lambda_{0}} m_{1}(t, x, x_{j}).
\end{equation}
Since the function $m_{1}(t, x, 0)$ exhibits exponential growth as $t \to \infty$, the function $m_{1}(t, x)$
will display the same behaviour.

We can now infer the asymptotic behaviour of the higher moments $m_{n}(t, x)$ for $n \geqslant 2$ from the equations~\eqref{eq008} in much the same way it was done above for the higher moments $m_{n}(t, x, y)$.

We now proceed to prove the equalities for $C_{1} (x, y)$ and $ C_{1}(x)$. By Corollary~\ref{cor2} the eigenvalue $\lambda_{0}$ is simple, from which it follows, according to Remark~\ref{RemL}, that
\[
C_{1} (x, y) = \lim_{t \to \infty} e^{-\lambda_{0} t} m_{1}(t, x, y) = Pm_{0} =
\left( m_{1}(0, x, y), f\right) f(x).
\]
But $m_{1}(0, x, y) = \delta_{y}(x)$, therefore
\[
C_{1} (x, y) = \left( m_{1}(0, x, y), f\right) f(x) = f(y) f(x).
\]
We also obtain from~\eqref{eq1298} that
\[
C_{1}(x) = \frac{1}{\lambda_{0}} \sum_{j=1}^{N} \beta_{j} C_{1}(x, x_{j}) =  f(x)\frac{1}{\lambda_{0}} \sum_{j=1}^{N} \beta_{j} f(x_{j}),
\]
which concludes the proof.\end{proof}

\begin{corollary}
\label{cor6} $C_{n}(x, y) = \psi^{n}(y) C_{n}(x)$, where $ \psi(y) =
\frac{\lambda_{0}f(y)}{\sum_{j=1}^{N} \beta_{j} f(x_{j})}$.
\end{corollary}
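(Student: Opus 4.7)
The plan is to prove the identity $C_{n}(x,y)=\psi^{n}(y)C_{n}(x)$ by induction on $n$, exploiting a homogeneity property of the nonlinearities $g_{n}^{(j)}$ defined in~\eqref{E:defgnj}.

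\textbf{Base case.} For $n=1$, I would simply substitute the explicit formulas from Theorem~\ref{thm06}: $C_{1}(x,y)=f(y)f(x)$ and $C_{1}(x)=f(x)\lambda_{0}^{-1}\sum_{j}\beta_{j}f(x_{j})$. Multiplying the second by $\psi(y)=\lambda_{0}f(y)/\sum_{j}\beta_{j}f(x_{j})$ gives $\psi(y)C_{1}(x)=f(y)f(x)=C_{1}(x,y)$, as required.

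\textbf{Inductive step.} Assume $C_{k}(x,y)=\psi^{k}(y)C_{k}(x)$ for all $k<n$, with $n\geqslant 2$. The key observation is that each monomial appearing in
\[
g_{n}^{(j)}(m_{1},\ldots,m_{n-1})=\sum_{r=2}^{n}\frac{\beta_{j}^{(r)}}{r!}\sum_{\substack{i_{1},\ldots,i_{r}>0\\ i_{1}+\cdots+i_{r}=n}}\frac{n!}{i_{1}!\cdots i_{r}!}\,m_{i_{1}}\cdots m_{i_{r}}
\]
has total index sum equal to $n$. Therefore, if we substitute $m_{k}=\psi^{k}(y)\tilde{m}_{k}$, every product $m_{i_{1}}\cdots m_{i_{r}}$ picks up exactly the common factor $\psi^{i_{1}+\cdots+i_{r}}(y)=\psi^{n}(y)$, giving
\[
g_{n}^{(j)}\bigl(\psi(y)\tilde{m}_{1},\ldots,\psi^{n-1}(y)\tilde{m}_{n-1}\bigr)=\psi^{n}(y)\,g_{n}^{(j)}(\tilde{m}_{1},\ldots,\tilde{m}_{n-1}).
\]
Applying this with $\tilde{m}_{k}=C_{k}(x_{j})$ and using the induction hypothesis $C_{k}(x_{j},y)=\psi^{k}(y)C_{k}(x_{j})$, we get
\[
g_{n}^{(j)}\bigl(C_{1}(x_{j},y),\ldots,C_{n-1}(x_{j},y)\bigr)=\psi^{n}(y)\,g_{n}^{(j)}\bigl(C_{1}(x_{j}),\ldots,C_{n-1}(x_{j})\bigr).
\]
Plugging this into the formula for $C_{n}(x,y)$ from Theorem~\ref{thm06} and pulling the common factor $\psi^{n}(y)$ out of the sum over $j$ yields $C_{n}(x,y)=\psi^{n}(y)C_{n}(x)$, closing the induction.

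The argument is essentially bookkeeping; the only substantive point is the degree-$n$ homogeneity of $g_{n}^{(j)}$ with respect to the weighting $m_{k}\mapsto \psi^{k}m_{k}$, which is forced by the constraint $i_{1}+\cdots+i_{r}=n$ in~\eqref{E:defgnj}. No further analytic input is needed beyond the already-established formulas of Theorem~\ref{thm06}.
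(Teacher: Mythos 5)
Your proposal is correct and follows essentially the same route as the paper: induction on $n$, with the base case read off from the explicit formulas for $C_{1}(x,y)$ and $C_{1}(x)$, and the inductive step resting on the degree-$n$ homogeneity of $g_{n}^{(j)}$ forced by the constraint $i_{1}+\cdots+i_{r}=n$ in~\eqref{E:defgnj}. No gaps.
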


\begin{proof}
We prove the corollary by induction on $n$. The induction basis for $n=1$ holds due to Theorem~\ref{thm06}. Let us now deal with the induction step: according to Theorem~\ref{thm06},
\begin{align}
\label{eq1998}
C_{n+1}(x, y) &= \sum_{j=1}^{N} g^{(j)}_{n+1}\left(C_{1}(x_{j}, y), \ldots, C_{n}(x_{j}, y) \right) D^{(j)}_{n+1}(x),\\
\label{eq1999}
C_{n+1}(x) &= \sum_{j=1}^{N} g^{(j)}_{n+1}\left(C_{1}(x_{j}), \ldots, C_{n}(x_{j}) \right) D^{(j)}_{n+1}(x);
\end{align}
so it suffices to prove that for all $j$ the equalities
\[
g^{(j)}_{n+1}\left(C_{1}(x_{j}, y), \ldots, C_{n}(x_{j}, y) \right) = \psi^{n+1}(y) g^{(j)}_{n+1}\left(C_{1}(x_{j}), \ldots, C_{n}(x_{j}) \right)
\]
hold.
As it follows from the definition and the induction hypothesis,
\begin{multline*}
g^{(j)}_{n+1}\left(C_{1}(x_{j}, y), \ldots, C_{n}(x_{j}, y) \right) =\\=
\sum_{r=2}^{n+1} \frac{\beta_{j}^{(r)}}{r!} \sum_{\substack{i_{1}, \ldots, i_{r} > 0 \\
i_{1} + \cdots + i_{r} = n+1}} \frac{n!}{i_{1}! \cdots i_{r}!} C_{i_{1}}(x_{j}, y) \cdots C_{i_{r}}(x_{j}, y) =\\
= \psi^{n+1} (y) \sum_{r=2}^{n+1} \frac{\beta_{j}^{(r)}}{r!} \sum_{\substack{i_{1}, \ldots, i_{r} > 0 \\ i_{1} + \cdots + i_{r} = n+1}} \frac{n!}{i_{1}! \cdots i_{r}!} C_{i_{1}}(x_{j}) \cdots C_{i_{r}}(x_{j}),
\end{multline*}
which proves the corollary.
\end{proof}

\section{Proof of Theorem~\ref{thm1}.}\label{S:limthm}
We will need a few auxiliary lemmas. Let us introduce the function
\begin{equation}\label{def1}
f(n, r) := \sum_{\substack{i_{1}, \ldots, i_{r} > 0 \\ i_{1} + \cdots + i_{r} = n}} i_{1}^{i_{1}}  \cdots  \,i_{r}^{i_{r}},\qquad 1 \leqslant r \leqslant n.
\end{equation}

\begin{lemma}
\label{lem2} $f(n, n) = 1$ and $f(n, 1) = n^{n}$; for
$2 \leqslant r \leqslant n$ the following formula holds:
\[
f(n, r) = \sum_{u=1}^{n-r+1} u^{u} f(n-u, r-1).
\]
\end{lemma}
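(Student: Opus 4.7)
The plan is to verify each of the three assertions directly from the definition~\eqref{def1}, since each reduces to a one-line combinatorial observation.

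For the boundary value $f(n,n)=1$, I would note that the only tuple $(i_1,\dots,i_n)$ of strictly positive integers with $i_1+\cdots+i_n=n$ is $(1,1,\dots,1)$; substituting this into~\eqref{def1} gives $1^1\cdots 1^1=1$. For the other boundary value $f(n,1)=n^n$, the sum~\eqref{def1} has a single term with $i_1=n$, contributing $n^n$.

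For the recursion with $2\leqslant r\leqslant n$, the plan is to split the sum~\eqref{def1} according to the value of the last coordinate $u:=i_r$. Since $i_1,\dots,i_{r-1}$ must each be at least $1$, we have $i_1+\cdots+i_{r-1}=n-u\geqslant r-1$, so $u$ ranges over $\{1,2,\dots,n-r+1\}$. For each fixed $u$, summing $i_1^{i_1}\cdots i_{r-1}^{i_{r-1}}$ over tuples $(i_1,\dots,i_{r-1})$ of positive integers with sum $n-u$ yields exactly $f(n-u,r-1)$ by~\eqref{def1}, and the factor $i_r^{i_r}=u^u$ pulls out. Summing over $u$ gives the claimed recursion.

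Since every step is a direct rewriting of the definition, there is no real obstacle; the only care needed is verifying that the range of $u$ stated in the lemma matches the constraint that the remaining $r-1$ positive integers sum to $n-u$.
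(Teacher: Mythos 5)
Your proposal is correct and follows essentially the same route as the paper: the paper groups the terms of~\eqref{def1} by the value of $i_1$ rather than $i_r$, which by symmetry of the definition is the identical argument, with the same range $1\leqslant u\leqslant n-r+1$. Your additional explicit verification of the boundary cases $f(n,n)=1$ and $f(n,1)=n^n$ is fine and immediate from the definition.
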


\begin{proof}
To prove the lemma, group all addends in~\eqref{def1} by the possible values of
$i_{1}$; since $1 \leqslant i_{1} \leqslant n-r+1$,
\[
f(n, r) = \sum_{i_{1} = 1}^{ n-r+1} i_{1}^{i_{1}} \sum_{\substack{i_{2}, \ldots, i_{r} > 0 \\
i_{2} + \cdots + i_{r} = n-i_{1}}} i_{2}^{i_{2}}  \cdots i_{r}^{i_{r}} = \sum_{u=1}^{n-r+1} u^{u} f(n-u, r-1),
\]
which completes the proof.
\end{proof}

\begin{lemma}
\label{lem3} The function $g(x) = x^{x} (n-x)^{n-x}$, $x \in [1, n-1]$, attains its maximum at the ends of its domain.
\end{lemma}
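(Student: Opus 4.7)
The plan is to reduce the claim to a standard one-variable calculus fact by passing to the logarithm. Set $h(x) := \log g(x) = x\log x + (n-x)\log(n-x)$ for $x \in [1, n-1]$, which is well defined and differentiable since both $x$ and $n-x$ are strictly positive on the interval. Showing that $g$ attains its maximum at an endpoint is equivalent to showing the same for $h$, so I would work with $h$ throughout.

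First I would compute
\[
h'(x) = \log x + 1 - \log(n-x) - 1 = \log\!\frac{x}{n-x},
\]
and observe that $h'(x) = 0$ iff $x = n/2$, while $h'(x) < 0$ for $x < n/2$ and $h'(x) > 0$ for $x > n/2$. Thus $h$ is strictly decreasing on $[1, n/2]$ and strictly increasing on $[n/2, n-1]$, so its unique minimum on $[1, n-1]$ is at $x = n/2$ (this requires $n \geq 2$; for $n=2$ the interval degenerates to the single point $x=1$, and the claim is trivial). From the monotonicity behaviour of $h$ it follows immediately that the maximum on the closed interval $[1, n-1]$ must be attained at one (or both) of the endpoints $x=1$ and $x=n-1$.

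Finally, I would verify the two endpoint values directly: $g(1) = 1^1 (n-1)^{n-1} = (n-1)^{n-1}$ and $g(n-1) = (n-1)^{n-1}\cdot 1^1 = (n-1)^{n-1}$, which are equal, so the maximum value is $(n-1)^{n-1}$ and it is attained at both endpoints. There is no real obstacle here; the only subtle point to mention is that the symmetry $g(x) = g(n-x)$ is already built into the problem, which is why the endpoints yield the same value and why the critical point $n/2$ is a minimum rather than a maximum.
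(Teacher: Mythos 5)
Your proof is correct and follows essentially the same route as the paper: pass to $\ln g(x)$, compute the derivative $\ln x - \ln(n-x)$, and conclude from the sign change at $x = n/2$ that $g$ decreases then increases, so the maximum is at the endpoints. The extra remarks about the symmetry $g(x)=g(n-x)$ and the common endpoint value $(n-1)^{n-1}$ are fine but not needed.
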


\begin{proof}
By applying the logarithm to both sides of the equation above, we obtain
\[
\ln g(x) = x\ln x + (n-x) \ln (n-x),
\]
from which it follows that
\[
\left( \ln g(x) \right)' = \ln x + 1 - \ln (n-x) - 1 = \ln x - \ln (n-x).
\]
This means that $\left( \ln g(x) \right)' < 0$ for $x < \frac{n}{2}$, and $\left( \ln
g(x) \right)' > 0$ for $x > \frac{n}{2}$. Therefore, the function $g(x)$
is decreasing when $x < \frac{n}{2}$ and increasing when $x > \frac{n}{2}$, which concludes the proof.
\end{proof}

\begin{remark}
The lemma above holds for any other intercept included in the original domain $[1,
n-1]$ (with the only difference being that the function's values at the ends of the new intercept can be different.)
\end{remark}

\begin{lemma}
\label{lem4} For all $n \geqslant 2$ the inequality $f(n, 2) < 6
(n-1)^{n-1}$ holds.
\end{lemma}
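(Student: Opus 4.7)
The plan is to exploit the fact that $f(n,2) = \sum_{k=1}^{n-1} k^k(n-k)^{n-k}$ is dominated by the two boundary terms at $k=1$ and $k=n-1$. Writing $a_k := k^k(n-k)^{n-k}$ and using the symmetry $a_k = a_{n-k}$, one has $a_1 = a_{n-1} = (n-1)^{n-1}$ and $a_2 = a_{n-2} = 4(n-2)^{n-2}$. The two endpoint terms already contribute exactly $2(n-1)^{n-1}$, so the task reduces to controlling the remaining $(n-3)$ middle terms by at most $4(n-1)^{n-1}$.

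To bound the middle terms, I would apply Lemma~\ref{lem3} to the subinterval $[2, n-2]$ of $[1, n-1]$, as permitted by the remark following that lemma. This yields
\[
a_k \leqslant \max(a_2, a_{n-2}) = 4(n-2)^{n-2},\qquad k = 2, \ldots, n-2,
\]
so that $\sum_{k=2}^{n-2} a_k \leqslant 4(n-3)(n-2)^{n-2}$.

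It then remains to verify the elementary estimate $(n-3)(n-2)^{n-2} \leqslant (n-1)^{n-1}$, which follows from the short chain
\[
(n-3)(n-2)^{n-2} \leqslant (n-1)(n-2)^{n-2} \leqslant (n-1)(n-1)^{n-2} = (n-1)^{n-1}
\]
by monotonicity of $x \mapsto x^{n-2}$ and the inequality $n-3 \leqslant n-1$. Combining everything yields $f(n,2) \leqslant 2(n-1)^{n-1} + 4(n-1)^{n-1} = 6(n-1)^{n-1}$, with strict inequality coming from $n-3 < n-1$ whenever $n \geqslant 4$; the small cases $n \in \{2, 3\}$ (where the middle sum is empty) can be checked directly since $f(2,2)=1$ and $f(3,2)=8$.

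The conceptual heart of the argument — and the step I expect to be the main obstacle — is the refinement of Lemma~\ref{lem3} to the subinterval $[2, n-2]$, which replaces the loose endpoint bound $a_k \leqslant (n-1)^{n-1}$ by the much sharper $a_k \leqslant 4(n-2)^{n-2}$. A naive application of Lemma~\ref{lem3} to the full interval produces only $f(n,2) \leqslant (n-1)(n-1)^{n-1}$, which is exponentially too weak; the constant $6$ in the statement is exactly what this two-level estimate produces, with everything else reduced to elementary arithmetic.
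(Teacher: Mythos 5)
Your proof is correct and is essentially the paper's own argument: the paper nominally phrases it as an induction on $n$, but the inductive hypothesis is never actually used, and its step consists of exactly your estimate — split off the two endpoint terms contributing $2(n-1)^{n-1}$, bound the $n-3$ middle terms by $4(n-2)^{n-2}$ via Lemma~\ref{lem3} applied to the subinterval $[2,n-2]$, and finish with $(n-3)(n-2)^{n-2}<(n-1)^{n-1}$. (Incidentally, your value $f(3,2)=8$ is the correct one; the paper's ``$f(3,2)=4$'' is a harmless slip, as $8<24$ anyway.)
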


\begin{proof}
We prove the lemma by induction on $n$. The induction basis for $n=2$ and $n>3$ holds: $f(2, 2) = 1 < 6$, $f(3, 2) =
4 < 24$.

Let us now turn to the induction step: to complete the proof, we have to show that the statement of the lemma holds for any $n>3$ if it holds for all the preceding values of $n$. By applying Lemmas~\ref{lem2} and~\ref{lem3} and evaluating the sum by the maximum term multiplied by their number, we obtain
\begin{multline*}
f(n, 2) = \sum_{u=1}^{n-2+1} u^{u} f(n - u, 2- 1) =\\=
\sum_{u=1}^{n-1} u^{u} (n-u)^{n-u} =  2 (n-1)^{n-1} +
\sum_{u=2}^{n-2} u^{u} (n-u)^{n-u} \leqslant\\
\leqslant 2(n-1)^{n-1} + 4 (n-3) (n-2)^{n-2} < 6 (n-1)^{n-1},
\end{multline*}
which concludes the proof.\end{proof}

\begin{lemma}
\label{lem5} For all $n \geqslant 3$ the inequality $f(n, 3) < 6
(n-1)^{n-1}$ holds.
\end{lemma}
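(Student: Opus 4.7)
The plan is to mirror the inductive proof of Lemma~\ref{lem4}, invoking the recursion of Lemma~\ref{lem2} and then applying Lemma~\ref{lem4} itself twice. The base case $n=3$ is immediate: the only composition of $3$ into three positive parts is $1+1+1$, so $f(3,3) = 1 < 24 = 6\cdot 2^{2}$.

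For the induction step with $n \geqslant 4$, the recursion of Lemma~\ref{lem2} gives
\[
f(n,3) = \sum_{u=1}^{n-2} u^{u} f(n-u,2),
\]
and since $n-u \geqslant 2$ throughout the sum, Lemma~\ref{lem4} yields $f(n-u,2) < 6(n-u-1)^{n-u-1}$. The decisive observation is that the remaining sum
\[
\sum_{u=1}^{n-2} u^{u}(n-u-1)^{n-u-1}
\]
collapses, via the substitution $v = n-1-u$, to exactly $f(n-1,2)$. Applying Lemma~\ref{lem4} a second time therefore gives $f(n,3) < 36(n-2)^{n-2}$.

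It then remains to verify that $36(n-2)^{n-2} \leqslant 6(n-1)^{n-1}$ for $n \geqslant 4$, equivalently
\[
\frac{(n-1)^{n-1}}{(n-2)^{n-2}} = (n-1)\left(1 + \frac{1}{n-2}\right)^{n-2} \geqslant 6.
\]
Since $(1+1/k)^{k}$ is monotonically increasing in $k$ and equals $9/4$ at $k=2$, the left-hand side is at least $(n-1)\cdot 9/4 \geqslant 27/4 > 6$ for all $n \geqslant 4$, which closes the induction.

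The only step requiring any insight is spotting the collapse of the sum back to $f(n-1,2)$; without this reduction one would have to imitate the boundary/interior split used in Lemma~\ref{lem4} together with a three-variable analogue of Lemma~\ref{lem3}, which is significantly more tedious. The remaining work is a routine numerical verification that the base-case $n=3$ is not absorbed by the induction step.
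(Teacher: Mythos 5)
Your proof is correct, and it deviates from the paper's argument only in the final bounding step, but in a way worth noting. Both you and the paper unfold $f(n,3)=\sum_{u=1}^{n-2}u^{u}f(n-u,2)$ via Lemma~\ref{lem2} and apply Lemma~\ref{lem4} termwise to reach $6\sum_{u=1}^{n-2}u^{u}(n-1-u)^{n-1-u}$; from there the paper invokes Lemma~\ref{lem3} (endpoint maximization of $u^{u}(n-1-u)^{n-1-u}$) and multiplies by the number of terms, getting $6(n-2)^{n-1}<6(n-1)^{n-1}$, whereas you observe that the sum is \emph{exactly} $f(n-1,2)$ and apply Lemma~\ref{lem4} a second time, getting $36(n-2)^{n-2}$, which you then compare with $6(n-1)^{n-1}$ through the elementary estimate $(n-1)\left(1+\tfrac{1}{n-2}\right)^{n-2}\geqslant \tfrac{27}{4}>6$ for $n\geqslant 4$. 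Your route bypasses Lemma~\ref{lem3} entirely and in fact yields a sharper bound for large $n$ ($36(n-2)^{n-2}$ versus the paper's $6(n-2)(n-2)^{n-2}$), at the cost of the extra numerical verification; the paper's route is the one that generalizes to Lemma~\ref{lem6}, where the inner sum no longer collapses to a single $f(\cdot,2)$ and the endpoint argument becomes essential. One cosmetic remark: despite being phrased as induction (as in the paper), neither argument ever uses the induction hypothesis for $f(\cdot,3)$ itself — both are direct consequences of Lemma~\ref{lem4} — so the "induction" framing could simply be dropped.
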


\begin{proof}
We prove the lemma by induction on $n$. The induction basis for $n>3$ holds: $f(3, 3) = 1 < 24$.

As for the inductive step, according to Lemma~\ref{lem4}
\begin{multline*}
f(n, 3) = \sum_{u=1}^{n-2} u^{u} f(n-u, 2) \leqslant
6 \sum_{u=1}^{n-2}  u^{u} (n-u-1)^{n-u-1}\leqslant\\ \leqslant 6 (n-2)^{n-2} (n-2) = 6 (n-2)^{n-1} < 6 (n-1)^{n-1},
\end{multline*}
which completes the proof.
\end{proof}

\begin{lemma}
\label{lem6} For all $n \geqslant r$ and $r \geqslant 2$ the inequality
$f(n, r) < 6 (n-1)^{n-1}$ holds.
\end{lemma}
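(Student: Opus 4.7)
My plan is to induct on $r$, with base cases $r = 2$ and $r = 3$ already provided by Lemmas~\ref{lem4} and~\ref{lem5}. For the inductive step with $r \ge 4$, I would assume the bound $f(m, r-1) < 6(m-1)^{m-1}$ for every $m \ge r-1$ and apply the recursion of Lemma~\ref{lem2}:
\[
f(n, r) = \sum_{u=1}^{n-r+1} u^u f(n-u, r-1).
\]
Since $n-u \ge r-1$ throughout the range of summation, the inductive hypothesis applies term by term and gives
\[
f(n, r) < 6 \sum_{u=1}^{n-r+1} u^u (n-u-1)^{n-u-1}.
\]

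To close the induction, I would bound each summand using Lemma~\ref{lem3}. Each summand has the form $u^u(N-u)^{N-u}$ with $N = n-1$, so by the same analysis as in Lemma~\ref{lem3} (applied with $n-1$ in place of $n$) it is maximized on $[1, n-2]$ at the two endpoints, both of which evaluate to $(n-2)^{n-2}$. Since $r \ge 3$ the summation range $[1, n-r+1]$ is contained in $[1, n-2]$ and has $n-r+1 \le n-2$ terms, so
\[
f(n, r) < 6 (n-r+1)(n-2)^{n-2} \le 6(n-2)(n-2)^{n-2} = 6(n-2)^{n-1} < 6(n-1)^{n-1},
\]
which is exactly the claim.

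This argument is essentially a verbatim generalization of the proof of Lemma~\ref{lem5}, so I do not expect a serious obstacle. Strictness is preserved at every step because the inductive hypothesis is already strict and at least one summand (for instance $u = 1$) is nonzero. The only bookkeeping point that needs care is the compatibility of the $u$-range with both the Lemma~\ref{lem2} recursion and the inductive hypothesis, which holds because $r-1 \le n-u \le n-1$ throughout; and the very mild estimate $n-r+1 \le n-2$, which uses $r \ge 3$ and is where the $r = 2$ case must be peeled off as a separate base.
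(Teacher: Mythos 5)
Your proof is correct and follows essentially the same route as the paper: the recursion of Lemma~\ref{lem2} combined with the bound for $r-1$, then Lemma~\ref{lem3} to dominate each summand by $(n-2)^{n-2}$ and a count of at most $n-r+1\le n-2$ terms. The only (harmless) deviations are organizational: you induct on $r$ rather than $n$, and by applying Lemma~\ref{lem3} on the full interval $[1,n-2]$, where both endpoint values equal $(n-2)^{n-2}$, you sidestep the paper's extra step of comparing $g(n-r+1)=(n-r+1)^{n-r+1}(r-2)^{r-2}$ with $g(1)$ via a second maximization.
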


\begin{proof}
We prove the lemma by induction on $n$. The induction basis and the cases $r = 2$ and $r = 3$ were considered in Lemmas~\ref{lem4} and~\ref{lem5}.

We can therefore assume that $r \geqslant 4$. Let us now prove the inductive step. By Lemma~\ref{lem2}
\begin{equation}
\label{eq2}
f(n, r) = \sum_{u=1}^{n-r+1} u^{u} f(n-u, r-1) \leqslant 6 \sum_{u=1}^{n-r+1} u^{u} (n-u-1)^{n-u-1}.
\end{equation}
It follows from Lemma~\ref{lem3} that the function $g(u) := u^{u} (n-u-1)^{n-u-1}$ attains its maximum value at (one of) the ends of the intercept $[1, n-r+1]$. These values are
\[
g(1) = (n-2)^{n-2},\qquad
g(n-r+1) = (n-r+1)^{n-r+1} (r-2)^{r-2}\,.
\]
Consider $g(n-r+1)$. By applying Lemma~\ref{lem3} once again, we obtain $h(r) := g(n-r+1)$
attains its maximum value at (one of) the ends of the intercept $[3, n]$, these values being
\[
h(3) = (n-2)^{n-2},\qquad h(n) = (n-2)^{n-2}.
\]
So the largest term in the right-hand side of~\eqref{eq2} is
$g(1) =(n-2)^{n-2}$. Therefore,
\[
6 \sum_{u=1}^{n-r+1} u^{u} (n-u-1)^{n-u-1} \leqslant 6 (n - r + 1) (n-2)^{n-2} < 6 (n-1)^{n-1},
\]
which concludes the proof.
\end{proof}

\begin{lemma}
\label{lem7} There is such a constant $C>0$ that for all $n \geqslant r
\geqslant 2$ the inequality
\begin{align}\label{eq019}
f(n, r) < C \frac{n^{n}}{r^{ r -1}}
\end{align}
holds.
\end{lemma}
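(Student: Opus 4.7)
The plan is to obtain the estimate directly, via a convexity argument combined with Stirling's formula. The natural induction on $r$, combined with Lemma~\ref{lem4}, only yields $f(n,r)\leq C n^n/r^{r-1}$ in the regime $n\geq 6e r$, so a separate argument would be required for $r\leq n<6er$; I prefer to sidestep this case split entirely.

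First, for any composition $(i_1,\ldots,i_r)$ of $n$ with $i_j\geq1$, I would prove the pointwise bound
\[
\prod_{j=1}^r i_j^{i_j}\leq (n-r+1)^{n-r+1}.
\]
Indeed, the map $(i_1,\ldots,i_r)\mapsto \sum_j i_j\ln i_j$ is convex on the simplex $\{i\in\R^r:\,i_j\geq1,\ \sum_j i_j=n\}$ (each summand $t\mapsto t\ln t$ is convex), and this simplex has as its vertices the $r$ permutations of $(n-r+1,1,\ldots,1)$; a convex function on a polytope attains its maximum at a vertex, where the value reduces to $(n-r+1)\ln(n-r+1)$. Since there are $\binom{n-1}{r-1}$ ordered compositions of $n$ into $r$ positive parts, this yields the crude estimate $f(n,r)\leq \binom{n-1}{r-1}(n-r+1)^{n-r+1}$.

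Second, I would reduce this to the target via Stirling's formula. Writing $\binom{n-1}{r-1}=(r/n)\binom{n}{r}$ and applying the refined Stirling bound $k!=\sqrt{2\pi k}(k/e)^k e^{\theta_k}$ with $0<\theta_k<1/(12k)$ gives, for $1\leq r\leq n-1$,
\[
\binom{n-1}{r-1}\leq e^{1/12}\sqrt{\frac{r}{2\pi n(n-r)}}\cdot\frac{n^{n-1}}{r^{r-1}(n-r)^{n-r}},
\]
while for $n-r\geq1$ the identity $(n-r+1)^{n-r+1}/(n-r)^{n-r}=(n-r+1)(1+1/(n-r))^{n-r}\leq e(n-r+1)$ holds. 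Combining and then applying $(n-r+1)^2\leq 4(n-r)^2$ (for $n-r\geq1$), $(n-r)/n\leq1$, and $r\leq n$ collapses the square-root prefactor, after one further division by $n$, to at most $\sqrt{2/(\pi n)}\leq 1/\sqrt{\pi}$ for $n\geq 2$, so that
\[
\binom{n-1}{r-1}(n-r+1)^{n-r+1}\leq \frac{e^{13/12}}{\sqrt{\pi}}\cdot\frac{n^n}{r^{r-1}}.
\]
This proves the lemma with any $C\geq e^{13/12}/\sqrt{\pi}$; the boundary case $n=r$, where $f(n,n)=1$ while the right-hand side is $Cn$, is immediate.

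The main obstacle will be the Stirling bookkeeping: since $\binom{n-1}{r-1}(n-r+1)^{n-r+1}$ is only a crude upper bound on $f(n,r)$, the prefactor must be shown to stay absolutely bounded across the full parameter range $n\geq r\geq 2$. In particular, the regime $r$ close to $n$ (where $n-r$ is small and the Stirling denominator $\sqrt{n-r}$ becomes delicate) and the regime $r$ fixed with $n\to\infty$ (where $(n-r+1)^2/(n(n-r))$ tends to $1$ rather than to $0$) must both be tracked carefully, using the cancellation $(n-r+1)/\sqrt{n(n-r)}\leq\sqrt{2}\sqrt{(n-r)/n}$ in the first regime and the gain of $1/n$ from $n^{n-1}\to n^n$ in the second.
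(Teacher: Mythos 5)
Your argument is correct, and it is genuinely different from the paper's. The paper proves the bound by induction on $n$, using the recursion of Lemma~\ref{lem2}, the unimodality statement of Lemma~\ref{lem3}, and the auxiliary estimate $f(n,r)<6(n-1)^{n-1}$ of Lemmas~\ref{lem4}--\ref{lem6}; the induction step requires a rather delicate case analysis (the sets $\mathbb{D}_{1},\mathbb{D}_{2},\mathbb{D}_{3}$, a finite verification up to $\tilde n=106$, and separate treatment of $r\le 6$ and $r\in\{n-1,n\}$), and the resulting constant is implicit and large (at least $6^{6}$). You instead bound each summand of $f(n,r)$ by its maximum over the simplex $\{i_j\ge 1,\ \sum_j i_j=n\}$ via the maximum principle for the convex function $\sum_j i_j\ln i_j$, count the $\binom{n-1}{r-1}$ compositions, and finish with Robbins-type Stirling bounds; I checked your bookkeeping, and it is sound: the prefactor after the substitutions $(n-r+1)\le 2(n-r)$, $r\le n$, $n-r\le n$ indeed collapses to $\sqrt{2/(\pi n)}\le 1/\sqrt{\pi}$, giving the explicit constant $e^{13/12}/\sqrt{\pi}<2$, and the boundary case $r=n$ is trivial as you say. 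What your route buys is a short, non-inductive proof with a small explicit constant and a clear indication that the bound is sharp at exponential scale (your crude estimate $\binom{n-1}{r-1}(n-r+1)^{n-r+1}$ matches $n^{n}/r^{r-1}$ to leading exponential order); it also makes Lemmas~\ref{lem4}--\ref{lem6} unnecessary for this purpose. What the paper's route buys is that it stays entirely within elementary manipulations of the recursion (no Stirling approximation and no convexity maximum principle), at the price of the heavy case analysis. Either proof suffices for the application in Theorem~\ref{thm1}, where only the existence of some finite $C$ matters.
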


\begin{proof}
We first introduce the quantities
\begin{align}\label{E:defn1}
n_{1}&=\max\{n\in\N:~ 6^{6}(n-5)^{n-5} \geqslant 4^{4} (n-3)^{n-3}\},\\
\label{E:defn2}
n_{2}&=\max\{n\in\N:~ 283 (n-2)^{n-2} \geqslant (n-1)^{n-1}\},\\
\label{E:defn3}
n_{3}&=\max\left\{n\in\N:~ \left(1+ \frac{2}{n-1} \right)^{n-1} \leqslant 2 e\right\},\\
\label{E:defntilde}
 \tilde{n} &= \max \{n_{1}, n_{2}, n_{3}\}
\end{align}
and then the following sets of ordered pairs $(n,r)$:
\begin{align*}
 \mathbb{D}&=\{(n,r):~ 2\leqslant r\leqslant n\},\\
\mathbb{D}_{1}&=\{(n,r):~ 2\leqslant r\leqslant n\leqslant \tilde{n}\},\\
\mathbb{D}_{2}&=\{(n,r):~ 2\leqslant r\leqslant 6,~
r\leqslant n\},\\
\mathbb{D}_{3}&=\{(n,r):~ r = n \text{ or } r = n-1\},\\
\widetilde{\mathbb{D}} &= \mathbb{D}_{1} \cup \mathbb{D}_{2} \cup \mathbb{D}_{3}.
\end{align*}

It can easily be seen that $n_{1},n_{2},n_{3}<\infty$; in fact, $\tilde{n} =
106$. Therefore, the set $\mathbb{D}_{1}$ contains a finite number of pairs,
and we can pick a large enough $C$ for~\eqref{eq019} to hold for all pairs
from this set.

As follows from Lemma~\ref{lem6}, the same can be said of the set $\mathbb{D}_{2}$. Indeed, since for all $n \geqslant r
\geqslant 2$ the inequality $f(n, r) < 6(n-1)^{n-1}$ is true, we obtain
\begin{align*}
f(n, 2) &< 6 (n-1)^{n-1} = 6 \cdot 2^{2-1}  \frac{(n-1)^{n-1}}{2^{2-1}};\\
f(n, 3) &< 6 (n-1)^{n-1} = 6 \cdot 3^{3-2} \frac{(n-1)^{n-1}}{2^{2-1}};\\
&\cdots \\
f(n, 6) &< 6 (n-1)^{n-1} = 6 \cdot 6^{6-1} \frac{(n-1)^{n-1}}{6^{6-1}}.
\end{align*}
Therefore, for $C \geqslant 6^{6}$~\eqref{eq019} holds for any pair from the
set $\mathbb{D}_{2}$.

Finally,~\eqref{eq019} also holds for any pair from $\mathbb{D}_{3}$ with $C
\geqslant 1/2$, because by definition the following inequalities are true:
\[
f(n, n) = 1 < \frac{C n^{n}}{r^{r-1}} = Cn,\qquad
f(n, n-1) = 2(n-1) < \frac{C n^{n}}{r^{r-1}} = \frac{C n^{n}}{(n-1)^{n-2}}.
\]

From this it follows that the constant $C$ can be chosen large enough for
\eqref{eq019} to hold for any element of the set $\widetilde{\mathbb{D}}$. In addition, we set it to be large enough for the inequality to hold for all ordered pairs
$(\tilde{n}+1, r) \in \mathbb{D} \setminus \widetilde{\mathbb{D}}$, which can be done due to the number of these pairs being finite. We now fix $C$ according to these considerations.

Consequently, to complete the proof we only have to demonstrate that for the $C$ chosen above~\eqref{eq019} holds for all $(n, r) \in \mathbb{D} \setminus
\widetilde{\mathbb{D}}$. We do this by induction on
$n$, proving the statement on every step for all $r$ such that $(n, r) \in \mathbb{D} \setminus
\widetilde{\mathbb{D}}$.

By the definition of $\tilde{n}$ (see~\eqref{E:defntilde}) and thanks to the choice of $C$
we can use $n=\tilde{n}+1$ as induction basis. We now turn to the induction step: we assume that for some $n\geqslant \tilde{n}+1$ the statement of the lemma holds for all ordered pairs $(n,r) \in \mathbb{D} \setminus
\widetilde{\mathbb{D}}$ and prove that it then holds for all pairs $(n+1,
r) \in \mathbb{D} \setminus \widetilde{\mathbb{D}}$.

By Lemma~\ref{lem2} and the induction hypothesis
\[
f(n+1, r)= \sum_{u=1}^{n-r+2} u^{u} f(n+1-u, r-1) < \frac{C}{(r-1)^{ r-2}} \sum_{u=1}^{n-r+2} u^{u} (n+1-u)^{n-u+1}.
\]
We note that
\begin{equation}\label{E:sum1}
\sum_{u=1}^{n-r+2} u^{u} (n+1-u)^{n-u+1} = n^{n} + 4(n-1)^{n-1} + 27(n-2)^{n-2} + \sum_{u=4}^{n-r+2} u^{u} (n+1-u)^{n-u+1}.
\end{equation}
In order to evaluate the sum in the right-hand side of the equation, we point out that by Lemma~\ref{lem3} the terms $k(u) :=
u^{u} (n+1-u)^{n-u+1}$ in the sum attain their maximum values at the ends of $[4, n-r+2]$; these values are
\[
k(4) = 4^{4} (n-3)^{n-3},\qquad
k(n-r+2) = (n-r+2)^{n-r+2} (r-1)^{r-1}.
\]
To find out which one of these two values $k(4)$ and $k(n-r+2)$ is greater,
consider the function
\[
l(r) :=
(n-r+2)^{n-r+2}\cdot (r-1)^{r-1}.
\]
Since $(n,r)\not\in\widetilde{\mathbb{D}}$, $(n,r)\not\in\mathbb{D}_{2}
\cup \mathbb{D}_{3}$. Therefore, $r \in [7, n-2]$, and the function $l(r)$ assumes its maximum value at the ends of $[7, n-2]$; these values are
\[
l(7) = 6^{6} (n-5)^{n-5},\qquad l(n-2) = 4^{4} (n-3)^{n-3},
\]
and we have to find out, once again, which one of them is larger.

Again, since $(n,r)\not\in\widetilde{\mathbb{D}}$,
$(n,r)\not\in\mathbb{D}_{1}$, and $6^{6}(n-5)^{n-5} < 4^{4}
(n-3)^{n-3}$. Therefore, $l(7) \leqslant l(n-2)$, from which we obtain
\begin{equation}\label{E:estku}
k(u) := u^{u} (n+1-u)^{n-u+1}\leqslant 4^{4} (n-3)^{n-3}\quad\text{for}\quad n\in [4, n-r+2].
\end{equation}

We can now finally evaluate the sum in the right-hand side of~\eqref{E:sum1}. Since none of the terms $u^{u} (n+1-u)^{n-u+1}$ for $u \in [4, n-r+2]$ in the right-hand side of~\eqref{E:sum1} exceed $4^{4} (n-3)^{n-3}$, and the number of these terms does not exceed $n-8$,
\begin{equation}\label{E:sum2}
\sum_{u=4}^{n-r+2} u^{u} (n+1-u)^{n-u} \leqslant 4^{4} (n-8) (n-3)^{n-3} \leqslant 4^{4} (n-2)^{n-2}.
\end{equation}

We therefore obtain from~\eqref{E:sum1} and~\eqref{E:sum2} that
\begin{multline*}
\sum_{u=1}^{n-r+2} u^{u} (n+1-u)^{n-u+1}
\leqslant n^{n} + 4(n-1)^{n-1} + 27(n-2)^{n-2} + 4^{4} (n-2)^{n-2} =\\
= n^{n} + 4(n-1)^{n-1} + 283 (n-2)^{n-2}.
\end{multline*}

Since $(n,r)\not\in\widetilde{\mathbb{D}}$,
$(n,r)\not\in\mathbb{D}_{1}$, $283 (n-2)^{n-2} < (n-1)^{n-1}$, which allows us to rewrite the previous inequality as follows:
\[
\sum_{u=1}^{n-r+2} u^{u} (n+1-u)^{n-u+1}
\leqslant n^{n} + 4(n-1)^{n-1} + 283 (n-2)^{n-2} \leqslant n^{n} + 5(n-1)^{n-1}.
\]

Consequently,
\begin{multline}\label{E:sum233}
f(n+1, r)  \leqslant \frac{C}{(r-1)^{ r-2}} \left[n^{n} + 5(n-1)^{n-1} \right] =\\ = \frac{C (n+1)^{n+1}}{r^{r -1}} \frac{r^{r - 1}}{(r-1)^{r-2}}\left[\frac{n^{n}}{(n+1)^{n+1}} + \frac{5 (n-1)^{n-1}}{(n+1)^{n+1}}\right].
\end{multline}
It is obvious that
\[
\frac{r^{r - 1}}{(r-1)^{r-2}} \cdot\frac{n^{n}}{(n+1)^{n+1}} = \frac{\left(1 + \frac{1}{r-1} \right)^{r-1}}{\left(1 + \frac{1}{n} \right)^{n}} \cdot \frac{r-1}{n}.
\]
Since the function $\left(1 + \frac{1}{x} \right)^{x}$ is monotonically increasing,
\[
\frac{\left(1 + \frac{1}{r-1} \right)^{r-1}}{\left(1 + \frac{1}{n} \right)^{n}} \cdot \frac{r-1}{n} < \frac{r-1}{n}.
\]
Now, as
\[
\frac{r^{r - 1}}{(r-1)^{r-2}} \cdot \frac{5 (n-1)^{n-1}}{(n+1)^{n+1}} = 5 \frac{\left(1 + \frac{1}{r-1} \right)^{r-1}}{\left(1 + \frac{2}{n-1} \right)^{n-1}} \cdot \frac{r-1}{(n+1)^{2}}
\]
and as the function $\left(1 + \frac{1}{x} \right)^{x}$ is also monotonically increasing,
\[
5 \frac{\left(1 + \frac{1}{r-1} \right)^{r-1}}{\left(1 + \frac{2}{n-1} \right)^{n-1}} \cdot \frac{r-1}{(n+1)^{2}} \leqslant \frac{5}{2} \frac{1}{n+1},
\]
because $(n,r)\not\in\widetilde{\mathbb{D}}$ and therefore,
$(n,r)\not\in\mathbb{D}_{1}$, from which we obtain $ \left(1+ \frac{2}{n-1} \right)^{n-1}
> 2 e$. This allows us to rewrite \eqref{E:sum233} as follows:
\begin{multline*}
f(n+1, r) \leqslant \frac{C (n+1)^{n+1}}{r^{r -1}} \left[ \frac{r-1}{n} +
\frac{5}{2} \frac{1}{n+1}\right] \leqslant \frac{C (n+1)^{n+1}}{r^{r -1}} \cdot \frac{r + 3/2}{n} \leqslant  \frac{C (n+1)^{n+1}}{r^{r -1}},
\end{multline*}
where the last inequality follows from the fact that
$(n,r)\not\in\widetilde{\mathbb{D}}$ and therefore,
$(n,r)\not\in\mathbb{D}_{3}$, that is to say, $r \leqslant n-1$. This concludes the proof.
\end{proof}

We now turn to proving Theorem~\ref{thm1}.
\begin{proof}
Let us define the functions
\[
m(n, x, y) :=\lim_{t\to \infty}\frac{m_{n}(t,x,y)}{m_{1}^{n}(t,x,y)} =\frac{C_{n} (x, y)}{C_{1}^{n} (x, y)},\quad
m(n, x) :=\lim_{t\to \infty}\frac{m_{n}(t,x)}{m_{1}^{n}(t,x)}= \frac{C_{n} (x)}{C_{1}^{n} (x)};
\]
as follows from Theorem~\ref{thm06} and $G_{\lambda}(x,y)$ being positive, these definitions are sound. Corollary~\ref{cor6} yields
\[
m(n, x, y) = m(n, x) = \frac{C_{n}(x)}{C_{1}^{n}(x)} = \frac{C_{n}(x, y)}{C_{1}^{n}(x, y)}.
\]

From these equalities and the asymptotic equivalences~\eqref{E:mainmom} we obtain the equalities~\eqref{eq1} in Theorem~\ref{thm1} in terms of moment convergence of the random variables $\xi(y)=\psi(y)\xi$ and $\xi$.

For the random variables $\xi(y)$ and $\xi$ to be uniquely defined by their
moments, it suffices to demonstrate, as was shown in \cite{YarBRW:e}, that
Carleman's criterion
\begin{equation}\label{E:Carl}
\sum_{n=1}^{\infty} m(n, x, y)^{-1/2n} = \infty,\qquad \sum_{n=1}^{\infty} m(n, x)^{-1/2n} = \infty
\end{equation}
holds.
We establish below that the series for the moments $m(n, x)$ diverges and that, therefore, said moments define the random variable $\xi$ uniquely; the statement concerning $\xi(y)$ and its moments can be proved in much the same manner.

Since  $\beta^{(r)}_{j} = O(r! \,r^{r-1})$, there is such a constant $D$ that
for all $r \geqslant 2$ and $j = 1, \ldots, N$ the inequality
$\beta^{(r)}_{j} < D r! \,r^{r-1}$ holds. We assume without loss of
generality that for all $n$
\[
C_{n}(x) \leqslant \max_{j = 1, \ldots, N}\left(C_{n}(x_{j}) \right) = \left(C_{n}(x_{1}) \right).
\]

Let
\[
\gamma := 2 N \cdot C \cdot D \cdot E\cdot\frac{\lambda_{0}\beta_{2}}{2}\cdot C_{1}^{2}(x_{1}),
\]
where $C$ is as defined in Lemma~\ref{lem7}, and the constant $E$ is such that
$C_{n}(x_{1}) \leqslant \gamma^{n-1} n! \,n^{n}$ for $n \leqslant
\max\{n_{*}, 2 \}$, where $n_{*}$ is defined in Theorem~\ref{thm06}.

Let us show by induction that
\[
C_{n}(x) \leqslant C_{n}(x_{1}) \leqslant \gamma^{n-1} n! \,n^{n}.
\]

The induction basis for $n = 1$ is valid due to the choice of $C$. To prove the inductive step, we will demonstrate that
\[
C_{n+1}(x) \leqslant C_{n+1}(x_{1}) \leqslant \gamma^{n} (n+1)! \,(n+1)^{n+1}.
\]
It follows from the formula for $C_{n+1}(x_{1})$ and the estimate for
$D_{n}^{(j)}(x)$ from Theorem~\ref{thm06} that
\[
C_{n+1}(x_{1}) \leqslant
\sum_{j=1}^{N}\sum_{r=2}^{n+1} \frac{\beta^{(j)}_{r}}{r!} \sum_{\substack{i_{1}, \ldots, i_{r} > 0 \\
i_{1} + \cdots + i_{r} = n+1}} \frac{(n+1)!}{i_{1}! \cdots i_{r}!} C_{i_{1}}(x_{1})\cdots C_{i_{r}}(x_{1}) \frac{2}{\lambda_{0}(n+1)}.
\]
By the induction hypothesis
\[
\frac{(n+1)!}{i_{1}! \cdots i_{r}!} C_{i_{1}}(0) \cdots C_{i_{r}}(0) \leqslant \gamma^{n+1-r} (n+1)! i_{1}^{i_{1}} \cdots i_{r}^{i_{r}};
\]
which, added to the fact that $\beta_{j}^{(r)} < D r! \,r^{r-1}$ and $\gamma^{n+1-r} \leqslant
\gamma^{n-1}$, yields
\begin{multline*}
\sum_{j=1}^{N} \sum_{r=2}^{n+1} \frac{\beta_{j}^{(r)}}{r!} \sum_{\substack{i_{1}, \ldots, i_{r} > 0 \\
i_{1} + \cdots + i_{r} = n+1}} \frac{(n+1)!}{i_{1}! \cdots i_{r}!} C_{i_{1}}(x_{1}) \cdots C_{i_{r}}(x_{1}) \leqslant\\
\leqslant N\gamma^{n-1} D (n+1)! \sum_{r=2}^{n+1}  r^{r-1} \sum_{\substack{i_{1}, \ldots, i_{r} > 0 \\ i_{1} + \cdots + i_{r} = n+1}} i_{1}^{i_{1}} \cdots i_{r}^{i_{r}} =\\
= N \gamma^{n-1}\,D (n+1)! \sum_{r=2}^{n+1}  r^{r-1} f(n+1, r).
\end{multline*}
We infer from Lemma~\ref{lem7} that
\begin{multline*}
N \gamma^{n-1}\,D (n+1)! \sum_{r=2}^{n+1}  r^{r-1} f(n+1, r) \leqslant N \gamma^{n-1} (n+1)!\,D\cdot C \sum_{r=2}^{n+1} (n+1)^{n+1} \leqslant\\\leqslant N \gamma^{n-1} D \cdot C (n+1)! (n+1)^{n+2}.
\end{multline*}
Therefore, by referring to the definition of $\gamma$ we obtain
\[
C_{n+1}(x) \leqslant  \gamma^{n} (n+1)! (n+1)^{n+1},
\]
which completes the proof of the induction step.

Finally, since $n! \leqslant \left(\frac{n+1}{2} \right)^{n}$,
$C_{n}(x) \leqslant \frac{\gamma^{n}}{2^{n}} (n+1)^{2n}$. Thus,
\[
m(n, x) = \frac{C_{n}(x)}{C_{1}^{n}(x)} \leqslant \left(\frac{\gamma}{2 C_{1}(x)} \right)^{n} (n+1)^{2n},
\]
from which it follows that
\[
\sum_{n=1}^{\infty} m(n, x)^{-1/2n} \geqslant \sqrt{\frac{2 C_{1}(x)}{\gamma}} \sum_{n=1}^{\infty} \frac{1}{n+1} = \infty.
\]
Thus the condition~\eqref{E:Carl} holds, and the corresponding Stieltjes moment problem for the moments $m(n,x)$ has a unique solution~\cite[Th.~1.11]{ShT:e}, and therefore the equalities~\eqref{eq1} hold in terms of convergence in distribution. This completes the proof of Theorem~\ref{thm1}.
\end{proof}

\textbf{Acknowledgements.} The research was supported by the Russian
Foundation for Basic Research, project no. 17-01-00468.

%

\end{document}